\title{A metric between quasi-isometric trees}
\date{\today}
\author[Á.~Martínez-Pérez]{Álvaro Martínez-Pérez}
\address{Departamento de Geometría y Topología\\ Universidad Complutense de Madrid\\ Madrid 28040  Spain}
\email{alvaro\_martinez@mat.ucm.es}
\thanks{Partially supported by MTM 2009-07030.}
\newtheorem{definicion}{Definition}[section]
\newtheorem{nota}[definicion]{Remark}
\newtheorem{prop}[definicion]{Proposition}
\newtheorem{lema}[definicion]{Lemma}
\newtheorem{obs}[definicion]{Remark}
\newtheorem{teorema}[definicion]{Theorem}
\newtheorem{cor}[definicion]{Corollary}
\newtheorem{ejp}[definicion]{Example}
\newtheorem{notation}[definicion]{Notation}
\newcommand{\br}{\ensuremath{\mathbb{R}}} 
\newcommand{\co}{\ensuremath{\colon}} 
\newcommand{\bn}{\ensuremath{\mathbb{N}}} 
\newcommand{\e}{\ensuremath{\mathrm{e}}} 
\begin{document}

\begin{abstract} It is known that PQ-symmetric maps on the boundary characterize the quasi-isometry type of visual hyperbolic spaces, in particular, of geodesically complete \br-trees. We define a map on pairs of PQ-symmetric ultrametric spaces which characterizes the branching of the space. We also show that, when the ultrametric spaces are the corresponding end spaces, this map defines a metric between rooted geodesically complete simplicial trees with minimal vertex degree 3 in the same quasi-isometry class. Moreover, this metric measures how far are the trees from being rooted isometric.
\end{abstract}

\maketitle

\tableofcontents

\begin{footnotesize}
Keywords: Tree, real tree, ultrametric, end space, bounded distortion equivalence, quasi-isometry,  PQ-symmetric, pseudo-doubling metric space.
\end{footnotesize}

\begin{footnotesize}
\textbf{Mathematics Subject Classification (2000):} 54E40; 30C65; 53C23.
\end{footnotesize}

\section{Introduction}

The study of  quasi-isometries between trees and 
the induced maps on their end spaces has a  voluminous literature.
This is often set in the more general context of hyperbolic metric spaces
and their boundaries.
See Bonk and Schramm \cite{BoSchr},
Buyalo and Schroeder \cite{BS},
Ghys and de la Harpe \cite{G-H},
and Paulin \cite{Pau}
to name a few.

For a quasi-isometry $f\co X\to Y$ between Gromov hyperbolic, almost geodesic metric spaces, Bonk and Schramm 
define \cite[Proposition 6.3]{BoSchr} 
the induced map $\partial f\co\partial X\to\partial Y$ between the boundaries at infinity
and prove \cite[Theorem 6.5]{BoSchr} 
that $\partial f$ is  PQ-symmetric  
with respect to any metrics on $\partial X$ and $\partial Y$ 
in their canonical gauges. Moreover, they prove that a PQ-symmetric map between bounded metric spaces can be extended to a map between their hyperbolic cones and obtain that a PQ-symmetric map between the boundaries at infinity of Gromov hyperbolic, almost geodesic metric spaces, implies a quasi-isometry equivalence between the spaces.
In the special case that $X$ and $Y$ are $\br$-trees, $\partial X= end(X,v)$, $\partial Y=end(Y,w)$ and the end space metrics
are in the canonical gauges for any choice of roots.

Another source for the result that quasi-isometries between $\br$-trees induce PQ-symmetric homeomorphisms on their ultrametric end spaces,
is Buyalo and Schroeder \cite[Theorem 5.2.17]{BS}. They work with Gromov hyperbolic, geodesic metric spaces and with visual boundaries on their
boundaries. When specialized to $\br$-trees, these boundaries are the ultrametric end spaces. For the converse in this approach see \cite{M}.

In \cite{Hug-M-M} we defined bounded distortion equivalences and we proved that bounded distortion equivalences characterize power PQ-symmetric homeomorphisms
between certain classes of bounded, complete, uniformly perfect, ultrametric spaces which we called pseudo-doubling.  This class includes those ultrametric spaces arising up to similarity as the end spaces of bushy trees. 

Bounded distortion equivalences can be seen from a geometrical point of view as a coarse version of quasiconformal homeomorphisms (see for example \cite{Ahl}, \cite{Hei} and \cite{Hub} for a geometric approach to quasiconformal maps) where instead of looking at the distortion of the spheres with the radius tending to 0 we consider the distortion of all of them. 

Of central importance in the theory of quasiconformal mappings are Teichmüller spaces. The Teichmüller space is the set of Riemannian surfaces of a given quasiconformal type and the Teichmüller metric measures how far are the spaces from being conformal equivalent. There is also an extensive literature on this, see for example \cite{Ahl} and \cite{Hub}. The question we deal with in this paper is to see if something similar to a Teichmüller metric can be defined with these bounded distortion equivalences playing the role of the quasiconformal homeomorphisms.

Here we consider the set of ultrametric spaces of a given PQ-symmetry type. What is obtained is not a metric in the general framework of pseudo-doubling ultrametric spaces because it fails to hold the triangle inequality, nevertheless, adding a condition on the metrics, it is enough to characterize what we call here the \emph{branching} of the space which is a natural concept when looking at the ultrametric space as boundary of a tree. 

\begin{teorema} Let $(U,d),(U',d')$ be ultrametric spaces. If the metrics $d,d'$ are pseudo-discrete, then $\varrho(U,U')=0$ if and only if $(U,d)$ and $(U',d')$ have the same branching.
\end{teorema}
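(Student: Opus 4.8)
The plan is to prove the two implications separately, using the pseudo-discreteness hypothesis crucially in the forward direction. Throughout I identify each ultrametric space with the rooted tree whose vertices are its balls ordered by inclusion, so that the \emph{branching} of $(U,d)$ is encoded by the rooted isomorphism type of this combinatorial tree together with the branching numbers recorded at each vertex. I read $\varrho(U,U')$ as the least branching distortion achievable by the relevant (power) PQ-symmetric maps $f\co U\to U'$ within the fixed quasi-isometry class; thus $\varrho(U,U')=0$ means the infimum is approached by a sequence $(f_n)$ whose branching distortion tends to $0$. The pseudo-discrete hypothesis will be used through the fact that the radii occurring in each space form a uniformly well-separated set, giving a positive gap $\delta$ between consecutive splitting radii in $U$ and in $U'$.

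For the converse implication, suppose $(U,d)$ and $(U',d')$ have the same branching. Then there is a rooted isomorphism $\Phi$ between their ball trees preserving all branching numbers. I would use $\Phi$ to build a single PQ-symmetric map $f\co U\to U'$ carrying each ball of $U$ onto the corresponding ball of $U'$, defined level by level along the tree; pseudo-discreteness guarantees this construction terminates without accumulation problems and that $f$ is a genuine bounded distortion equivalence in the sense of \cite{Hug-M-M}. Since $\Phi$ respects the combinatorial branching exactly, the branching distortion of $f$ vanishes, and hence $\varrho(U,U')=0$.

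The substance of the theorem lies in the forward implication. Assume $\varrho(U,U')=0$ and fix a sequence $(f_n)$ realizing the infimum. The difficulty is that $\varrho$ need not be attained by any single map, so I cannot simply read off an isomorphism from one $f_n$. The idea is that once $n$ is large enough that the branching distortion of $f_n$ drops below the gap $\delta$, the map $f_n$ can no longer split or merge balls incorrectly, so it must induce an \emph{exact} branching-preserving correspondence on all balls up to any prescribed finite level. A diagonal argument over the finite levels then produces a single rooted isomorphism $\Phi$ of the full ball trees preserving branching numbers, which is precisely the assertion that $(U,d)$ and $(U',d')$ have the same branching.

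The main obstacle is this passage from the \emph{approximate}, asymptotic data carried by the minimizing sequence $(f_n)$ to an \emph{exact} combinatorial isomorphism, and it is exactly the step that fails without pseudo-discreteness: if the radii of either space accumulate, arbitrarily small branching distortion no longer forces the branching numbers to agree, and one expects $\varrho(U,U')=0$ to coexist with genuinely different branching. I would therefore isolate a lemma stating that under pseudo-discreteness the branching distortion is \emph{quantized} --- it is either $0$ or bounded below by a constant depending only on $\delta$ --- and deduce the theorem immediately from it.
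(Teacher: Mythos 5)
Your closing lemma --- that under pseudo-discreteness the sphere distortion is \emph{quantized}, either trivial or bounded below by a constant depending on $\delta$ --- is exactly the mechanism of the paper's proof, so the core idea is right. (Indeed, distinct distances from a fixed point in a pseudo-discrete space differ by a factor $\geq\delta$, so $D_f(x,\varepsilon)\in\{1\}\cup[\delta,\infty)$.) But note two distortions of the setup. First, $\varrho$ is defined through $\mathcal{K}_{U,U'}$, an infimum over \emph{all} homeomorphisms with two-sided bounded distortion; PQ-symmetry and the quasi-isometry class play no role in this theorem. Second, the converse direction is far easier than your level-by-level construction suggests: by the paper's definition, ``same branching'' already hands you a bijection $h$ under which each sphere $S(x,r)$ maps into a single sphere about $h(x)$, so $D_h\equiv 1\equiv D_{h^{-1}}$ and $\mathcal{K}_{U,U'}=1$ outright; no ball-tree reconstruction or appeal to \cite{Hug-M-M} is needed, and your reformulation of branching as a rooted ball-tree isomorphism would itself require justification (for incomplete or infinitely branching spaces, ends of the ball tree need not correspond to points).

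The genuine flaw is the finite-level/diagonal step in the forward direction. The bound defining $\mathcal{K}_{U,U'}$ is $\sup_{x}\sup_{\varepsilon>0}D_f(x,\varepsilon)$, a supremum over all centers and all scales \emph{simultaneously}; hence $\mathcal{K}_{U,U'}=1$ lets you pick a \emph{single} homeomorphism $h_\varepsilon$ with two-sided distortion below $\varepsilon<\delta$, and your quantization lemma then forces $D_{h_\varepsilon}\equiv 1$ at every point and every scale at once. There is no level cutoff and nothing to diagonalize --- the paper uses exactly one such map. Had the diagonal argument actually been needed, it would fail as stated: extracting a limit isomorphism from level-$k$ correspondences requires finitely many balls per level, and pseudo-discreteness provides no such finiteness (spheres may be infinite; only the realized radii are separated --- even the pseudo-doubling property only bounds chains of nested balls, not branching). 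Finally, ``deduce the theorem immediately'' elides half of what branching preservation demands: $D_{h_\varepsilon}\equiv 1$ gives only that $d(x,y)=d(x,z)$ forces $d'(h_\varepsilon(x),h_\varepsilon(y))=d'(h_\varepsilon(x),h_\varepsilon(z))$. To show $d(x,y)<d(x,z)$ forces \emph{strict} inequality you must, as the paper does, use the ultrametric identity $d(y,z)=d(x,z)$ to get $d'(h_\varepsilon(x),h_\varepsilon(y))\leq d'(h_\varepsilon(x),h_\varepsilon(z))$, and then run the same quantization argument on $h_\varepsilon^{-1}$ to exclude equality. These steps are short, but they are the actual content of the deduction your sketch leaves out.
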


As we mentioned before, if the ultrametric spaces are end spaces of \br--trees, then the PQ-symmetry type of the boundary corresponds to the quasi-isometry type of the trees.
In the case in which the trees have minimal vertex degree 3, the defined map $\varrho$ is a metric measuring how far are the trees from being rooted isometric.

\begin{teorema} $\varrho$ is an unbounded metric in $[(R,z)]$ such that given $(T,v),(T',w)\in [(R,z)]$, $\varrho((T,v),(T',w))=0$ if and only if there is a rooted isometry between $(T,v)$ and $(T',w)$. 
\end{teorema}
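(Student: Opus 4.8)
The plan is to deduce this statement from the preceding theorem by transporting everything to the level of end spaces. First I would recall that for a rooted geodesically complete simplicial tree $(T,v)$ with minimal vertex degree $3$, the end space $end(T,v)$ carries a canonical ultrametric $d_v$, and I would check that $d_v$ is pseudo-discrete precisely because every vertex is a genuine branch point: degree $\geq 3$ separates the successive branching radii, and the absence of degree-$2$ vertices means no intermediate scales are interpolated. Since all the trees lie in the common quasi-isometry class $[(R,z)]$, the cited results of Bonk--Schramm and Buyalo--Schroeder guarantee that the end spaces are mutually PQ-symmetric, so $\varrho$ is defined on every pair, and we may set $\varrho((T,v),(T',w)) := \varrho(end(T,v),end(T',w))$. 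Non-negativity and symmetry should then fall out directly from the definition of $\varrho$ on ultrametric spaces.

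For the identity of indiscernibles I would combine two facts. The preceding theorem gives $\varrho((T,v),(T',w))=0$ if and only if $end(T,v)$ and $end(T',w)$ have the same branching, since $d_v,d_w$ are pseudo-discrete. It then remains to prove that, for trees of minimal vertex degree $3$, \emph{same branching of the end spaces is equivalent to the existence of a rooted isometry}. The nontrivial direction is reconstruction: I would argue that the branching data of $end(T,v)$ records, scale by scale, the number of directions in which the space splits, and that under the minimal-degree-$3$ hypothesis this is exactly the degree information of $(T,v)$ organized by the tree order. Hence a branching-preserving identification propagates level by level from the root to a bijection of vertices preserving adjacency and distance to the root, that is, a rooted isometry. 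The degree-$3$ condition is essential, since degree-$2$ vertices would permit subdivisions altering the tree without changing its branching.

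The main obstacle is the triangle inequality, which the introduction warns fails in the general pseudo-doubling setting. I would show that the pseudo-discreteness forced by minimal degree $3$ is exactly what restores it: given near-optimal bounded distortion equivalences realizing $\varrho((T,v),(T'',u))$ and $\varrho((T'',u),(T',w))$, their composition is a bounded distortion equivalence $end(T,v)\to end(T',w)$, and I would estimate its contribution to $\varrho$ by the sum of the two, using that on a pseudo-discrete space the relevant distortion quantities accumulate additively across scales rather than multiplicatively. This additivity is the crux, and it is where the pseudo-discrete hypothesis must be used with care.

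Finally, for unboundedness I would exhibit an explicit sequence $(T_n,w_n)\in[(R,z)]$ with $\varrho((R,z),(T_n,w_n))\to\infty$. Keeping minimal degree $3$ and geodesic completeness, so that the quasi-isometry class is unchanged, I would push the excess branching of $T_n$ to deeper and deeper levels, forcing any bounded distortion equivalence to the reference end space to distort an unbounded number of scales; by the computation of $\varrho$ through branching this makes $\varrho((R,z),(T_n,w_n))$ grow without bound.
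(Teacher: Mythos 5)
Your treatment of the zero-distance characterization is essentially sound and parallels the paper's: the paper proves directly that a homeomorphism of end spaces with all sphere distortions equal to $1$ must be an isometry, by induction on Gromov products using the valence $\geq 3$ hypothesis, and then invokes Hughes's result that isometric end spaces correspond to rooted isometric trees; your route through ``same branching'' amounts to the same induction. Two corrections there, though: pseudo-discreteness of $d_v$ has nothing to do with valence $\geq 3$ --- it holds for every geodesically complete simplicial tree because Gromov products of distinct ends are integers, so $\delta=e$ works --- and the valence condition enters only later, to rule out branching-preserving maps that shift scales (such maps do exist when degree-$2$ vertices are allowed, which is precisely why the paper restricts to $\mathcal{T}_{\geq 3}$).

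The genuine gap is the triangle inequality, and it sits exactly where you placed your confidence in an unproved claim. You assert that on pseudo-discrete spaces the distortions of a composition ``accumulate additively across scales''; no argument is given, and this is not what happens. If $h_1$ has distortion bounded by $\mathcal{K}_1$, it can spread a single sphere $S(F,\varepsilon)$ over roughly $\ln\mathcal{K}_1+1$ distinct scales, and $h_2$ can then displace each of those scales separately by up to $\ln\mathcal{K}_2$, so the natural bound for $h_2\circ h_1$ is multiplicative in the logarithms. That is what the paper proves: $\ln\mathcal{K}_3\leq \ln\mathcal{K}_1\cdot\ln\mathcal{K}_2+2\ln\mathcal{K}_2+1$, where $\mathcal{K}_3=\mathcal{K}_{(T,v),(T'',x)}$; and even this bound requires valence $\geq 3$ (to produce, between two image scales that drift too far apart, intermediate branches whose preimages would violate the distortion bound) --- pseudo-discreteness alone does not suffice. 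The triangle inequality is then rescued not by additivity but by the double-logarithmic form of $\varrho$: the product bound yields $1+2\ln\mathcal{K}_3\leq(1+2\ln\mathcal{K}_1)(1+2\ln\mathcal{K}_2)$, whence $\varrho((T,v),(T'',x))\leq\varrho((T,v),(T',w))+\varrho((T',w),(T'',x))$, and this last step needs the discreteness fact that $\mathcal{K}_1>1$ forces $\mathcal{K}_1\geq e$, i.e.\ $\ln\mathcal{K}_1\geq 1$ (the case $\mathcal{K}_1=1$ being handled separately). Your outline cannot be completed as stated without this composition lemma and without exploiting the specific shape of $\varrho$. Finally, unboundedness also needs a quantitative input you do not supply: the paper proves that if $T$ has a vertex of order $m+1$ while every vertex of $T'$ needs depth $\mathcal{D}$ to accumulate $m$ descendants, then $\varrho\geq\ln(1+2\mathcal{D})$; some such lower-bound lemma is required to turn your ``push branching to deeper levels'' picture into actual divergence of $\varrho$.
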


This means that in the category $T_{\geq 3}$ the branching is enough to characterize isometry type, the quasi-isometry type and the ``distance'' between quasi-isometric objects.

\section{Preliminaries on trees, end spaces, and ultrametrics}

In this section,  we recall the definitions of the trees and their end spaces that are
relevant to this paper. 
We also describe a well-known correspondence between trees and ultrametric
spaces. See Fe{\u\i}nberg \cite{Feinberg} for an early result along these lines and  Hughes \cite{Hug} for additional background.

\begin{definicion} Let $(T,d)$ be a metric space. 
\begin{enumerate}
	\item $(T,d)$ is an \emph{$\mathbb{R}$--tree} if $T$ is uniquely arcwise connected and for all $x, y \in T$, the
unique arc from $x$ to $y$, denoted $[x,y]$, is isometric to the
subinterval $[0,d(x,y)]$ of $\mathbb{R}$.

\item
A \emph{rooted $\mathbb{R}$--tree} $(T,v)$ consists of an $\br$-tree $(T,d)$ and a point $v\in T$, called  the \emph{root}.

\item
\label{extensiongeod} A rooted $\mathbb{R}$--tree $(T,v)$ is
\emph{geodesically complete} if every isometric embedding
$f\co [0,t]\rightarrow T$ with $t>0$ and  $f(0)=v$ extends to an isometric embedding $F\co [0,\infty) \rightarrow T$. 

\item A \emph{simplicial $\mathbb{R}$--tree} is an $\br$-tree $(T,d)$ such that $T$ is the (geometric realization of) a simplicial complex
and every edge of $T$ is isometric to the closed unit interval $[0,1]$.
\end{enumerate}
\end{definicion}

\begin{notation} For any $x\in (T,v)$, let $||x||=d(x,v)$.
\end{notation}

\begin{definicion} An \emph{ultrametric space} is a metric space $(X,d)$ such that 
$d(x,y)\leq \max \{d(x,z),d(z,y)\}$
for all $x,y,z\in X$. 
\end{definicion}

\begin{definicion}\label{end} The \emph{end space} of a rooted
$\mathbb{R}$--tree $(T,v)$ is given by: 
$$end(T,v)=\{F\co [0,\infty) \rightarrow T \ |\ \text{$F(0)=v$ and $F$
is an isometric embedding}\}.$$ 

Let $F, G\in end(T,v)$.
\begin{enumerate}
	\item  The {\it Gromov product at infinity} is $(F|G)_v :=\sup \{t\geq 0 \ |\ F(t)=G(t)\}$.
	\item  The {\it end space metric} is $d_v(F,G) := \e^{-(F|G)_v}$.
	\item The arc $F([0, (F|G)_v])$ is denoted $[F|G]$.
\end{enumerate}
\end{definicion}

\begin{prop} If $(T,v)$ is a rooted $\mathbb{R}$--tree,
then $(end(T,v),d_v)$ is a complete ultrametric space of diameter
$\leq 1$.\qed
\end{prop}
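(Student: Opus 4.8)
The plan is to reduce everything to one structural fact about geodesic rays in an $\br$-tree, and then read off each assertion --- the metric axioms, the ultrametric inequality, the diameter bound, and completeness --- as a consequence. First I would dispose of the easy parts. Since every $F,G\in end(T,v)$ satisfy $F(0)=G(0)=v$, the set $\{t\geq 0: F(t)=G(t)\}$ contains $0$, so $(F|G)_v\geq 0$ and hence $d_v(F,G)=\e^{-(F|G)_v}\leq 1$, giving the diameter bound. Symmetry of $d_v$ is immediate from the symmetry of the defining condition $F(t)=G(t)$, non-negativity is automatic, and for $F$ compared with itself the agreement set is all of $[0,\infty)$, so $(F|F)_v=\infty$ and $d_v(F,F)=0$.

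The main obstacle --- and the fact on which the rest rests --- is the following: for $F,G\in end(T,v)$, the agreement set $A(F,G):=\{t\geq 0: F(t)=G(t)\}$ is a closed initial interval $[0,(F|G)_v]$ (interpreted as $[0,\infty)$ when the supremum is infinite). I would prove this from unique arcwise connectivity: if $F(t_0)=G(t_0)$, then $F([0,t_0])$ and $G([0,t_0])$ are both the unique arc $[v,F(t_0)]$, and since each is an isometric parametrization of this arc by arclength starting at $v$, they coincide pointwise; hence $[0,t_0]\subseteq A(F,G)$, so $A(F,G)$ is an interval containing $0$. Closedness follows from continuity of $F$ and $G$: if $t_n\uparrow s$ with $F(t_n)=G(t_n)$, then $F(s)=G(s)$. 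In particular the supremum defining $(F|G)_v$ is attained whenever it is finite. From this, the separation axiom $d_v(F,G)=0\Rightarrow F=G$ is immediate (then $(F|G)_v=\infty$, so $A(F,G)=[0,\infty)$), and the ultrametric inequality reduces to $(F|H)_v\geq\min\{(F|G)_v,(G|H)_v\}$: writing $a=(F|G)_v\leq b=(G|H)_v$ after relabelling, for every $t\leq a$ one has $F(t)=G(t)$ (as $t\leq a$) and $G(t)=H(t)$ (as $t\leq a\leq b$), whence $F(t)=H(t)$ and $(F|H)_v\geq a$.

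Finally, for completeness I would take a Cauchy sequence $(F_n)$ in $(end(T,v),d_v)$ and build its limit by a pointwise ``eventual constancy'' argument. Given $R>0$, the Cauchy condition with $\varepsilon=\e^{-R}$ produces $N_R$ so that $(F_n|F_m)_v> R$ for all $n,m\geq N_R$, which by the structural fact means $F_n$ and $F_m$ agree on $[0,R]$. Hence for each fixed $t$ the sequence $(F_n(t))_n$ is eventually constant, and I define $F(t)$ to be this eventual value. Checking that $F\in end(T,v)$ is routine: $F(0)=v$ since every $F_n(0)=v$, and $F$ is an isometric embedding because on any bounded range of parameters it agrees with some fixed $F_n$, which is an isometric embedding. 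The same agreement shows $F(t)=F_n(t)$ for all $t\leq R$ and $n\geq N_R$, so $(F_n|F)_v\geq R$ and $d_v(F_n,F)\leq\e^{-R}$; letting $R\to\infty$ gives $F_n\to F$. I expect the only delicate point to be the clean justification of the closed-initial-segment lemma and the attainment of the supremum, since the ultrametric inequality, the remaining metric axioms, and completeness all lean on it.
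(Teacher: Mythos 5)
Your proof is correct. Note that the paper itself offers no proof of this proposition: it is stated with a \qed and treated as well known, with Fe{\u\i}nberg \cite{Feinberg} and Hughes \cite{Hug} cited earlier as background, so there is no argument in the paper to compare against. Your write-up is the standard one, and it is organized around exactly the right structural fact: the agreement set $\{t\geq 0 : F(t)=G(t)\}$ is a closed initial segment $[0,(F|G)_v]$, which you correctly derive from unique arcwise connectedness (both $F([0,t_0])$ and $G([0,t_0])$ must be the unique arc $[v,F(t_0)]$, and an arc isometric to a real interval admits a unique arclength parametrization fixing the initial endpoint, since the only isometry of $[0,t_0]$ fixing $0$ is the identity). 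From that lemma your deductions of separation, the ultrametric inequality $(F|H)_v\geq\min\{(F|G)_v,(G|H)_v\}$, the diameter bound, and completeness (via eventual constancy of $F_n(t)$ for each fixed $t$, with the limit ray agreeing with some $F_n$ on every bounded parameter range) are all sound, including the attainment of the supremum when finite, which is indeed the point that needs care.
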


In this article a \emph{map} is a function that need not be continuous.

\begin{definicion}
\label{def:quasi-isometry}
A map $f\co X\to Y$ between metric spaces $(X,d_X)$ and $(Y,d_Y)$ is
a \emph{quasi-isometric map} if there are constants $\lambda
\geq 1$ and $A>0$ such that for all $x,x'\in X$,
$$\frac{1}{\lambda}d_X(x,x') -A \leq d_Y(f(x),f(x'))\leq \lambda
d_X(x,x')+A.$$ If $f(X)$ is a net in $Y$ (i.e., there exists $\epsilon >0$ such that for each $y\in Y$ there exists $x\in X$ such that
$d_Y(f(x), y) <\epsilon$), then $f$ is a
\emph{quasi-isometry} and $X,Y$ are \emph{quasi-isometric}.
\end{definicion}

\begin{definicion} A map $f:X \to Y$ between metric spaces is
called \emph{quasi-symmetric} if it is not constant and if there
is a homeomorphism $\eta:[0,\infty) \to [0,\infty)$ such that from
$|xa|\leq t|xb|$ it follows that $|f(x)f(a)|\leq
\eta(t)|f(x)f(b)|$ for any $a,b,x\in X$ and all $t\geq 0$. 
The function $\eta$ is called the \emph{control function} of $f$.
\end{definicion}

\begin{definicion}\label{def:PQ-symmetric} A quasi-symmetric map is said to be \emph{power quasi-symmetric}
or \emph{PQ-symmetric}, if its control function is of the form
\[\eta(t)= q \max\{t^p,t^{\frac{1}{p}}\}\] for some $p,q\geq 1$.
\end{definicion}

\section{Bounded distortion equivalences between ultrametric spaces.}

Let us recall some definitions as stated in \cite{Hug-M-M}.

\begin{definicion}
Let $f\co X\to Y$ be a homeomorphism between metric spaces
$(X,d_X)$ and $(Y,d_Y)$. If
$x_0\in X$ and $\varepsilon >0$, then the {\it distortion by $f$ of
the $\varepsilon$-sphere $S(x_0,\varepsilon) := \{ x\in X ~|~ d_X(x_0,x) =\varepsilon\}$ at $x_0$} is
$$D_f(x_0,\varepsilon) :=
\begin{cases}
\frac{\sup\{ d_Y(f(x_0),f(x)) ~|~ d_X(x_0,x)=\varepsilon\}}{\inf\{d_Y(f(x_0),f(x)) ~|~ d_X(x_0,x)=\varepsilon\}} & \text{if $S(x_0,\varepsilon)\not=\emptyset$}\\
\quad\quad\quad\quad\quad\quad 1 & \text{if $S(x_0,\varepsilon)=\emptyset$}
\end{cases}$$
\end{definicion}

\begin{definicion}
\label{def:qc et al}
Let $f\co X\to Y$ be a homeomorphism between  metric spaces. 
\begin{enumerate}
	\item $f$ is {\it conformal} if $\underset{\varepsilon\to 0}\limsup D_f(x_0,\varepsilon) =1$ for all $x_0\in X$.
		\item $f$ is {\it $K$-quasiconformal}, where $K>0$, if $\underset{\varepsilon\to 0}\limsup D_f(x,\varepsilon) \leq K$ for all $x\in X$.
			\item $f$ is {\it quasiconformal} if $f$ is $K$-quasiconformal for some $K>0$.
			\item $f$ has {\it bounded distortion} if there exists $K>0$ such that
$$\underset{x\in X}\sup\, \underset{\varepsilon> 0}\sup\, D_f(x,\varepsilon) \leq K.$$
\item $f$ is a {\it bounded distortion equivalence} if $f$ and $f^{-1}\co Y\to X$ have bounded distortion.
\end{enumerate}
\end{definicion}

Consider $(U,d),(U',d')$ two bounded distortion equivalent ultrametric spaces. Let $\mathcal{K}_{U,U'}$ be the greatest lower bound for $K$ such that there exist a homeomorphism $f\co U \to U'$ with  $\underset{x\in U}\sup\, \underset{\varepsilon> 0}\sup\, D_f(x,\varepsilon) \leq K$ and $\underset{x'\in U'}\sup\, \underset{\varepsilon> 0}\sup\, D_{f^{-1}}(x',\varepsilon) \leq K$. Then, given $\mathcal{U}$ a bounded distortion equivalence class of ultrametric spaces, let us define $\varrho\co \mathcal{U}\times \mathcal{U}\to \br_+ $ such that $$\varrho(U,U'):=ln(1+ 2ln(\mathcal{K}_{U,U'})).$$

\begin{nota} It could be interesting to extend the previous statement defining $\varrho(U,U'):=\infty$ if  $U$ and $U'$ are homeomorphic but not bounded distortion equivalent.
\end{nota}

\begin{definicion}\label{def:ramif} Given a bijection $h\co (U,d)\to (U',d')$ between ultrametric spaces we say that $h$ \textbf{preserves the branching}  
if given three points $x,y,z\in U$, $d(x,y)=d(x,z)$ implies that $d'(h(x),h(y))=d'(h(x),h(z))$ and $d(x,y)<d(x,z)$ implies that $d'(h(x),h(y))<d'(h(x),h(z))$ (i.e. $d(x,y)=d(x,z)$ if and only if $d'(h(x),h(y))=d'(h(x),h(z))$). If there exists such a bijection we say that the ultrametric spaces have the \textbf{same branching}.
\end{definicion}

\begin{obs} Note that this defines an equivalence relation.
\end{obs}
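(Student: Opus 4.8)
The plan is to verify the three defining properties of an equivalence relation for the relation ``$(U,d)$ and $(U',d')$ have the same branching''. Reflexivity is immediate: the identity map $\mathrm{id}\co(U,d)\to(U,d)$ is a bijection that trivially preserves the branching, since for any $x,y,z$ the conditions $d(x,y)=d(x,z)$ and $d(x,y)<d(x,z)$ are reproduced verbatim on the target side.

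For transitivity, I would suppose $h\co(U,d)\to(U',d')$ and $g\co(U',d')\to(U'',d'')$ both preserve the branching and show that the composition $g\circ h$ (again a bijection) does too, simply by chaining the two defining implications. Given $x,y,z\in U$ with $d(x,y)=d(x,z)$, applying $h$ gives $d'(h(x),h(y))=d'(h(x),h(z))$, and then applying $g$ at the points $h(x),h(y),h(z)\in U'$ gives $d''(g(h(x)),g(h(y)))=d''(g(h(x)),g(h(z)))$; the case $d(x,y)<d(x,z)$ is identical with $=$ replaced by $<$.

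The only step requiring care is symmetry, and this is where I expect the main obstacle. Suppose $h\co(U,d)\to(U',d')$ preserves the branching; I must show $h^{-1}$ does too, i.e.\ that the defining implications also run in reverse. The key point is that the two implications in Definition~\ref{def:ramif}, together with the symmetry in the roles of $y$ and $z$, upgrade to a full trichotomy correspondence: swapping $y$ and $z$ in the strict inequality shows that $d(x,z)<d(x,y)$ forces $d'(h(x),h(z))<d'(h(x),h(y))$, so the three mutually exclusive and exhaustive cases $=,<,>$ on the source side are sent to the corresponding three cases on the target side. Since these cases partition all possibilities on both sides, each implication reverses: if $d'(h(x),h(y))<d'(h(x),h(z))$ then $d(x,y)<d(x,z)$, because $d(x,y)=d(x,z)$ or $d(x,y)>d(x,z)$ would force equality or the opposite strict inequality on the target, a contradiction.

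Granting this reversal, symmetry follows quickly. Given $x',y',z'\in U'$ with preimages $x,y,z$ under $h$, the equality $d'(x',y')=d'(x',z')$ yields $d(x,y)=d(x,z)$, that is $d(h^{-1}(x'),h^{-1}(y'))=d(h^{-1}(x'),h^{-1}(z'))$, and likewise for strict inequality; hence the bijection $h^{-1}$ preserves the branching. With reflexivity, symmetry, and transitivity all established, ``having the same branching'' is an equivalence relation.
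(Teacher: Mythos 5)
Your proof is correct: the paper states this remark without proof, and your verification (identity for reflexivity, composition for transitivity, and the trichotomy argument upgrading the two defining implications to a full correspondence of $=,<,>$ so that $h^{-1}$ also preserves the branching) is exactly the routine check the paper leaves implicit. The symmetry step is indeed the only point requiring care, and you handle it correctly by noting that the three mutually exclusive cases exhaust all possibilities on both sides, forcing each implication to reverse.
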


\begin{definicion} Let $(X,d)$ be a metric space. We say that $d$ is \textbf{pseudo-discrete} if there is some $\delta>1$ such that for every non-empty sphere $S(x,r)$, and any $y$ such that $\frac{r}{\delta}<d(y,x)<r\cdot \delta$ then $d(x,y)=r$.
\end{definicion}

\begin{teorema}\label{prop:branching} Let $(U,d),(U',d')$ be ultrametric spaces. If the metrics $d,d'$ are pseudo-discrete, then $\varrho(U,U')=0$ if and only if $(U,d)$ and $(U',d')$ have the same branching.
\end{teorema}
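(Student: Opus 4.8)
The plan is to first reduce the statement to a claim about the constant $\mathcal{K}_{U,U'}$. Since $\varrho(U,U')=\ln(1+2\ln\mathcal{K}_{U,U'})$ and $\mathcal{K}_{U,U'}\geq 1$ always, we have $\varrho(U,U')=0$ if and only if $\mathcal{K}_{U,U'}=1$. Throughout I would fix pseudo-discreteness constants $\delta>1$ for $(U,d)$ and $\delta'>1$ for $(U',d')$, and begin with the elementary observation that in a pseudo-discrete ultrametric space two \emph{distinct} distances realized from a common point differ by a factor of at least $\delta$: if $r_1=d(x,y)<d(x,z)=r_2$ with $r_2<\delta r_1$, then $z$ lies in the window $r_1/\delta<d(x,z)<\delta r_1$ of the non-empty sphere $S(x,r_1)$, forcing $d(x,z)=r_1$, a contradiction. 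The same holds in $U'$ with $\delta'$.

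For the direction ``same branching implies $\varrho=0$'', suppose $h\co U\to U'$ is a bijection preserving the branching. The key point is that $h$ sends each sphere into a single sphere: if $d(x,y)=d(x,z)$ then $d'(h(x),h(y))=d'(h(x),h(z))$, so every $w\in S(x,\varepsilon)$ has the same image distance $d'(h(x),h(w))$; hence $D_h(x,\varepsilon)=1$ for all $x$ and $\varepsilon$, and symmetrically $D_{h^{-1}}(x',\varepsilon')=1$. It remains to check that $h$ is a homeomorphism. For fixed $x$, branching preservation makes $d(x,y)\mapsto d'(h(x),h(y))$ a well-defined order isomorphism $\phi_x$ between the sets of realized distances at $x$ and at $h(x)$; by the first paragraph both sets are multiplicatively separated, so any infinite strictly decreasing sequence in them tends to $0$, and since $\phi_x$ is an order isomorphism it follows that $\phi_x(t)\to 0$ as $t\to 0^+$. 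Thus $x_n\to x$, i.e. $d(x,x_n)\to 0$, gives $d'(h(x),h(x_n))=\phi_x(d(x,x_n))\to 0$, so $h$ is continuous, and by symmetry so is $h^{-1}$. Hence $h$ is a homeomorphism with distortion $1$ both ways, giving $\mathcal{K}_{U,U'}=1$. This is the step where pseudo-discreteness is essential: without it an order isomorphism of distance sets could shift the infimum away from $0$.

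For the converse, the heart of the argument is the claim that any homeomorphism $f\co U\to U'$ whose distortion is bounded by some $K<\min(\delta,\delta')$ on both $f$ and $f^{-1}$ automatically preserves the branching; granting this, the equality $\mathcal{K}_{U,U'}=1$ furnishes a homeomorphism with distortion below $\min(\delta,\delta')$, and that homeomorphism is the branching-preserving bijection we need. To prove the claim I would argue by contrapositive on a triple $x,y,z$. First consider $d(x,y)=d(x,z)=\varepsilon$ with $d'(f(x),f(y))\neq d'(f(x),f(z))$: here $y,z$ lie on the common sphere $S(x,\varepsilon)$, so $D_f(x,\varepsilon)$ is at least the ratio of the two image distances, which by $\delta'$-separation in $U'$ is $\geq\delta'$, forcing $K\geq\delta'$. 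Next consider $d(x,y)=\varepsilon_1<\varepsilon_2=d(x,z)$, where the isosceles property gives $d(y,z)=\varepsilon_2$ and the first paragraph gives $\varepsilon_2\geq\delta\varepsilon_1$. Writing $P=d'(f(x),f(y))$, $Q=d'(f(x),f(z))$, $R=d'(f(y),f(z))$, I would split into $P>Q$ and $P=Q$. If $P>Q$, the image ultrametric triangle forces $R=P$; since $x,y$ both lie on $S(z,\varepsilon_2)$, the distortion $D_f(z,\varepsilon_2)$ dominates $R/Q=P/Q$, while $\delta'$-separation gives $P/Q\geq\delta'$, so again $K\geq\delta'$. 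If instead $P=Q$, then $f(y),f(z)$ lie on the common sphere $S'(f(x),P)$, so the \emph{inverse} distortion $D_{f^{-1}}(f(x),P)$ dominates $\varepsilon_2/\varepsilon_1\geq\delta$, forcing $K\geq\delta$. In every case a failure of branching preservation yields $K\geq\min(\delta,\delta')$, which proves the claim and hence the converse.

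I expect the main obstacle to be this converse direction, precisely the bookkeeping of the last paragraph: for each way $f$ can disturb a triple one must select the right auxiliary sphere (centred at $x$, at $z$, or at $f(x)$, and read forwards or backwards) on which the offending pair of points actually sits, and then convert the resulting distance ratio into a distortion lower bound via pseudo-discreteness. The equality case $P=Q$ is the subtle one, since it is invisible to the forward distortion of $f$ and can be detected only through $f^{-1}$, which is exactly why the definition of $\mathcal{K}_{U,U'}$ must control both maps simultaneously.
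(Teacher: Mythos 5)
Your proof is correct, and your converse direction follows essentially the same route as the paper's: reduce $\varrho(U,U')=0$ to $\mathcal{K}_{U,U'}=1$, observe that pseudo-discreteness forces two distinct distances realized from a common point to differ by a factor of at least $\delta$, conclude that a homeomorphism with two-sided distortion below $\min(\delta,\delta')$ carries equal distances to equal distances, and then settle strict inequalities via the ultrametric isosceles property together with the inverse map. (The paper derives $P\leq Q$ from the isosceles property plus equality-preservation and then rules out $P=Q$ through $h^{-1}$; your sub-case $P>Q$ instead reads the contradiction off the forward distortion at $z$ --- a cosmetic difference.) Where you genuinely go beyond the paper is the forward direction: the paper asserts in one sentence that a branching-preserving bijection is a homeomorphism with all distortions equal to $1$, whereas you actually prove continuity of $h$ and $h^{-1}$, using the multiplicative $\delta$-separation of realized distances to show that the induced order isomorphism $\phi_x$ of distance sets sends distances tending to $0$ to distances tending to $0$. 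This step is really needed, since $\mathcal{K}_{U,U'}$ is by definition an infimum over homeomorphisms, and it is exactly where pseudo-discreteness enters the forward implication; your remark that without pseudo-discreteness an order isomorphism of distance sets could shift the infimum away from $0$ (so that a branching-preserving bijection need not be continuous) is accurate. In short: same strategy as the paper, with a gap in the paper's forward direction properly filled.
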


\begin{proof} If the branching is the same then there is a homeomorphism $h$ such that the spheres are preserved, this is, $D_h(x,r)=1$ and $D_{h^{-1}(h(x),r)}=1$ for any $x\in U$ and any $r>0$.

Suppose that both ultrametric spaces are pseudo-discrete with the same constant $\delta>1$ and let $\varrho(U,U')=0$ which is equivalent to saying that $\mathcal{K}_{U,U'}=1$. Then, for any $\varepsilon >1$ there exists a homeomorphism $h_\varepsilon \co U \to U'$ such that given $x,y,z \in U$ with $d(x,y)=d(x,z)$  then, assuming $d'(h_\varepsilon(x),h_\varepsilon(y))\leq d'(h_\varepsilon(x),h_\varepsilon(z))_w$, 
$$\frac{d'(h_\varepsilon(x),h_\varepsilon(z))}{d'(h_\varepsilon(x),h_\varepsilon(y))}<\varepsilon.$$ Taking $\varepsilon <\delta$, we conclude that $d'(h_\varepsilon(x),h_\varepsilon(y))=d'(h_\varepsilon(x),h_\varepsilon(z))$.

Now suppose that $d(x,y)<d(x,z)$. Then $d(y,z)_v=d(x,z)$ by the properties of the ultrametric and, as we just proved,  taking $\epsilon <\delta$, $d'(h_\varepsilon(y),h_\varepsilon(z))=d'(h_\varepsilon(x),h_\varepsilon(z))$ and therefore 
$d'(h_\varepsilon(x),h_\varepsilon(y)) \leq d'(h_\varepsilon(x),h_\varepsilon(z))$. 

If $d'(h_\varepsilon(x),h_\varepsilon(z)) =d'(h_\varepsilon(x),h_\varepsilon(H))$ the same argument on $h^{-1}$ would imply that $d(x,y)=d(x,z)$ leading to contradiction and we obtain that $d'(h_\varepsilon(x),h_\varepsilon(y)) < d'(h_\varepsilon(x),h_\varepsilon(y))$ finishing the proof.  
\end{proof}

Let us recall the following definition from \cite{Hug-M-M}

\begin{definicion} A metric space is  \emph{pseudo-doubling} if for every $C>1$ there exist $N\in \bn$ such that:
if $0< r< R$ with $R/r=C$ and $x\in X$, then there are at most $N$ balls $B$ such that $B(x,r)\subseteq B \subseteq B(x,R)$.
\end{definicion}

\begin{nota} It is immediate to check that if $(X,d)$ is a metric space and $d$ is pseudo-discrete, then $(X,d)$ is pseudo-doubling.
\end{nota}

Nevertheless, two pseudo-doubling ultrametric spaces $U,U'$ with $\varrho(U,U')=0$ need not have the same branching.

\begin{ejp}\label{ejp1}
Let $(T,v),(T',w)$ be rooted geodesically complete \br--trees with: $end(T,v)=\{F_n,G_n,H_n \ | \ n=0,1,2,...\}$ and $end(T',w)=\{F'_n,G'_n,H'_n \ | \ n=1,2,...\}$ and the following relations.

For $end(T,v)$:

\begin{itemize} \item $(F_i|F_j)_v=0 \ \forall \, i\neq j$.
\item $(F_0|G_0)_v= (F_0|H_0)_v=1$.
\item $(F_i|G_i)_v=1 \ \forall \, i\geq 1$ 
\item $(F_i|H_i)_v=2 \ \forall \, i\geq 1$.
\end{itemize}

For $end(T',w)$:

\begin{itemize} \item $(F'_i|F'_j)_w=0 \ \forall \, i\neq j$.
\item $(F'_i|G'_i)_v=1 \ \forall \, i\geq 1$ 
\item $(F'_i|H'_i)_v=1+\frac{1}{i} \ \forall \, i\geq 1$.
\end{itemize}

Let us define $h_n\co end(T,v)\to end(T',w)$ such that $h_n(F_0)=F'_n$, $h_n(G_0)=G'_n$, $h_n(H_0)=H'_n$, $h_n(F_i)=F'_i$, $h_n(G_i)=G'_i$, $h_n(H_i)=H'_i \ \forall \, 1\leq i<n$ and $h_n(F_i)=F'_{i+1}$, $h_n(G_i)=G'_{i+1}$ and $h_n(H_i)=H'_{i+1} \ \forall \, i\geq n$. 

It is immediate to check that there is no sphere distorted by $h_n^{-1}$ and the unique spheres distorted by $h_n$ are $S_{1}(F_0)$ and $S_1(H_0)$ in $end(T,v)$ where $D_{h_n}(F_0,1)=e^{\frac{1}{n}}= D_{h_n}(G_0,1)$. Therefore, for any $K>1$, there exists some $n\in \bn$ such that $e^{\frac{1}{n}}<K$ and $\mathcal{K}_{(T,v),(T',w)}=1$ which implies that $\varrho ((T,v),(T',w))=0$.

Clearly, the ramification is not the same. From the cardinality together with the bounded distortion condition we know that $h(S_1(F_i))$ corresponds to $S_1(h(F'_i))$ for any $i$, and in $(T,v)$ we have $(F_0|G_0)_v=(F_0|H_0)_v=(G_0|H_0)_v$ while for any bijection $h\co end(T,v) \to end(T',w)$ either $(h(F_0)|h(G_0))_w \neq (h(F_0)|h(H_0))_w$ or $(h(F_0)|h(G_0))_w \neq (h(G_0)|h(H_0))_w$.\qed
\end{ejp}


\section{A metric between quasi-isometric trees}

Consider $(T,v),(T',w)$ two rooted geodesically complete \br--trees such that $end(T,v)$ and $end(T',w)$ are bounded distortion equivalent. When we are considering rooted trees we will denote $\mathcal{K}_{end(T,v),end(T',w)}$ just by $\mathcal{K}_{(T,v),(T',w)}$ and, therefore, we can define $$\varrho((T,v),(T',w)):=ln(1+ 2ln(\mathcal{K}_{(T,v),(T',w)})).$$

\begin{nota} We may also consider $\varrho((T,v),(T',w)):=\infty$ if  $end(T,v)$ and $end(T',w)$ are homeomorphic but not bounded distortion equivalent.
\end{nota}

Now, translating \ref{def:ramif} to trees:

\begin{obs} Let $(T,v),(T',w)$ be rooted geodesically complete \br--trees and $h\co end(T,v) \to end(T',w)$ be a bijection. Then, $h$ preserves the branching if given three branches $F,G,H\in end(T,v)$, if $(F|G)_v=(F|H)_v$ then $(h(F)|h(G))_v=(h(F)|h(H))_v$ and if $(F|G)_v<(F|H)_v$ then $(h(F)|h(G))_v<(h(F)|h(H))_v$. If there exist such a bijection, we say that $(T,v)$ and $(T',w)$ have the same branching.
\end{obs}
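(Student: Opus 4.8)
The plan is to derive this observation as a direct specialization of Definition \ref{def:ramif} to the case in which the two ultrametric spaces are the end spaces $(end(T,v),d_v)$ and $(end(T',w),d_w)$, translating the distance conditions into Gromov-product conditions by means of the defining formula $d_v(F,G)=\e^{-(F|G)_v}$ of Definition \ref{end}.

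First I would record that $t\mapsto\e^{-t}$ is a strictly decreasing homeomorphism of $[0,\infty)$ onto $(0,1]$. Hence, for branches $F,G,H\in end(T,v)$, one has $d_v(F,G)=d_v(F,H)$ if and only if $(F|G)_v=(F|H)_v$, and $d_v(F,G)<d_v(F,H)$ if and only if $(F|G)_v>(F|H)_v$; the same equivalences hold on $end(T',w)$ with respect to $d_w$ and the root $w$. The equality clause is then immediate: after applying $h$, the condition $d_v(F,G)=d_v(F,H)\Leftrightarrow d_w(h(F),h(G))=d_w(h(F),h(H))$ of Definition \ref{def:ramif} becomes exactly $(F|G)_v=(F|H)_v\Leftrightarrow(h(F)|h(G))_w=(h(F)|h(H))_w$.

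Next I would handle the strict clause, where the only point requiring attention is the reversal of inequalities introduced by the decreasing map $\e^{-t}$. Substituting the end-space metrics into the strict condition of Definition \ref{def:ramif}, the hypothesis $d_v(F,G)<d_v(F,H)$ becomes $(F|G)_v>(F|H)_v$ and the conclusion $d_w(h(F),h(G))<d_w(h(F),h(H))$ becomes $(h(F)|h(G))_w>(h(F)|h(H))_w$. Because Definition \ref{def:ramif} requires this implication for every ordered triple, I may interchange the roles of $G$ and $H$; this rewrites the requirement in the displayed form of the observation, that $(F|G)_v<(F|H)_v$ implies $(h(F)|h(G))_w<(h(F)|h(H))_w$. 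The two phrasings are logically equivalent precisely because the condition is quantified over all triples and is symmetric in the two compared branches, so nothing is lost in passing between them.

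Combining the two clauses, I would conclude that a bijection $h\co end(T,v)\to end(T',w)$ preserves the branching in the sense of Definition \ref{def:ramif} if and only if it satisfies the Gromov-product conditions displayed in the observation, whence two trees have the same branching exactly when such a bijection exists. The only (minor) obstacle is bookkeeping: keeping the orientation of the inequalities correct under the strictly decreasing correspondence between end-space distances and Gromov products. Once that is done, the statement is a formal rewriting of the ultrametric definition and needs nothing further.
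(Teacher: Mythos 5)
Your proposal is correct and coincides with the paper's own (implicit) justification: the paper states this remark without proof, treating it as a direct translation of Definition \ref{def:ramif} via $d_v(F,G)=\e^{-(F|G)_v}$, and your careful handling of the inequality reversal under the strictly decreasing map $t\mapsto \e^{-t}$ (absorbed by swapping $G$ and $H$ in the universally quantified condition) is exactly the bookkeeping that makes this translation work. You even correct a typo in the paper's statement by writing the subscript $w$ rather than $v$ on the Gromov products in the target space.
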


Now, from Proposition \ref{prop:branching}, we have:

\begin{cor} Let $(T,v),(T',w)$ be rooted geodesically complete \br--trees. If the metrics $d_v,d_w$ of $end(T,v),end(T,w)$ are pseudo-discrete, then $\varrho((T,v),(T',w))=0$ if and only if the ramification of $(T,v)$ and $(T',w)$ is the same.
\end{cor}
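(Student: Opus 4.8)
The plan is to recognize this corollary as a direct specialization of Theorem \ref{prop:branching} to the ultrametric spaces that arise as end spaces, so that essentially all the content has already been proved and the task reduces to checking that each ingredient translates correctly. First I would invoke the Proposition stating that $(end(T,v),d_v)$ and $(end(T',w),d_w)$ are complete ultrametric spaces, so that Theorem \ref{prop:branching} is applicable to them. By the notational convention fixed at the start of this section, $\varrho((T,v),(T',w))$ is by definition $\varrho(end(T,v),end(T',w))$ and $\mathcal{K}_{(T,v),(T',w)}=\mathcal{K}_{end(T,v),end(T',w)}$, while the hypothesis that $d_v,d_w$ are pseudo-discrete is exactly the hypothesis of Theorem \ref{prop:branching} for these spaces. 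Thus the left-hand condition $\varrho((T,v),(T',w))=0$ matches verbatim.

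The single point requiring care, which I expect to be the only (minor) obstacle, is reconciling the two formulations of ``same branching'': Definition \ref{def:ramif} phrases branching in terms of the ultrametric distances, whereas the Remark translating it to trees phrases it in terms of Gromov products. These agree because $d_v(F,G)=\e^{-(F|G)_v}$ (Definition \ref{end}) and $t\mapsto \e^{-t}$ is a strictly decreasing bijection of $[0,\infty)$. I would record the two equivalences $(F|G)_v=(F|H)_v \Leftrightarrow d_v(F,G)=d_v(F,H)$ and $(F|G)_v<(F|H)_v \Leftrightarrow d_v(F,G)>d_v(F,H)$. The order-reversal introduced by $\e^{-t}$ is harmless: the branching condition of Definition \ref{def:ramif} is required for all ordered triples, hence is symmetric under interchanging the two comparison points, so the strict-inequality clause $d(x,y)<d(x,z)\Rightarrow d'(h(x),h(y))<d'(h(x),h(z))$ is equivalent to the clause $(F|G)_v<(F|H)_v\Rightarrow (h(F)|h(G))_w<(h(F)|h(H))_w$ of the tree formulation. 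Consequently a bijection $h\co end(T,v)\to end(T',w)$ preserves the branching in the sense of Definition \ref{def:ramif} if and only if it preserves it in the tree sense, and the two notions of ``same branching'' coincide.

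With the hypotheses identified and the two branching notions matched, the conclusion of Theorem \ref{prop:branching} transfers without further work: $\varrho(end(T,v),end(T',w))=0$ holds if and only if the end spaces have the same branching, which by the translation above holds if and only if the trees $(T,v)$ and $(T',w)$ have the same ramification. Since $\varrho$ and the branching relation are carried between the trees and their end spaces by the very identifications just described, the corollary follows at once from Theorem \ref{prop:branching}.
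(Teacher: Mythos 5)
Your proposal is correct and matches the paper exactly: the paper states this corollary as an immediate consequence of Theorem \ref{prop:branching} (applied to the ultrametric end spaces via the identification $\varrho((T,v),(T',w))=\varrho(end(T,v),end(T',w))$) together with the preceding remark translating ``same branching'' into Gromov products, which is precisely the reduction you carry out. Your explicit check that the order reversal of $t\mapsto \e^{-t}$ is absorbed by the symmetry of the branching condition over all ordered triples is a detail the paper leaves implicit, but it is the same argument.
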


In, particular, the end space metric of a rooted geodesically complete simplicial tree is pseudo-discrete with $\delta=e$.

\begin{cor} If $(T,v),(T',w)$ are rooted simplicial geodesically complete \br--trees, then $\varrho((T,v),(T',w))=0$ if and only if the ramification of $(T,v)$ and $(T',w)$ is the same.
\end{cor}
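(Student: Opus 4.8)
The plan is to obtain this corollary directly from the previous one by verifying the displayed remark that immediately precedes it: for a rooted geodesically complete \emph{simplicial} $\mathbb{R}$--tree the end space metric is pseudo-discrete with $\delta=\e$. Once both $d_v$ and $d_w$ are known to be pseudo-discrete with a common constant, the statement is simply the specialization of the preceding corollary, so the only genuine content is the verification of pseudo-discreteness.

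First I would record that the Gromov products at infinity take values in the non-negative integers $\mathbb{Z}_{\geq 0}$. Given distinct ends $F,G\in end(T,v)$, the two rays coincide along $[F|G]$ and separate at the point $p=F((F|G)_v)=G((F|G)_v)$. If $p\neq v$, then at $p$ at least three directions meet (one back toward the root $v$ and one along each of $F$ and $G$), so $p$ has degree $\geq 3$; since in a simplicial $\mathbb{R}$--tree the interior points of edges have degree exactly $2$, such a branch point must be a vertex. As the root $v$ is a vertex and every edge has length $1$, the geodesic from $v$ to the vertex $p$ traverses a whole number of edges, whence $(F|G)_v=d(v,p)\in\mathbb{Z}_{\geq 0}$; the remaining case $p=v$ gives $(F|G)_v=0$. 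Consequently $d_v$ takes values only in the discrete set $\{\e^{-n}\mid n\in\mathbb{Z}_{\geq 0}\}$.

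Next I would check the pseudo-discrete condition with $\delta=\e$. Let $S(F,r)$ be a non-empty sphere, so $r=\e^{-n}$ for some $n\geq 0$, and suppose $G$ satisfies $\frac{r}{\delta}<d_v(F,G)<r\cdot\delta$, that is $\e^{-(n+1)}<d_v(F,G)<\e^{-(n-1)}$. Since $d_v(F,G)=\e^{-m}$ for some integer $m\geq 0$, and the only power of $\e^{-1}$ with non-negative integer exponent lying strictly between $\e^{-(n+1)}$ and $\e^{-(n-1)}$ is $\e^{-n}$, I conclude $d_v(F,G)=\e^{-n}=r$. Thus $d_v$ is pseudo-discrete with $\delta=\e$, and the identical argument gives the same for $d_w$. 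Feeding this common constant into the previous corollary then yields $\varrho((T,v),(T',w))=0$ if and only if the ramification of $(T,v)$ and $(T',w)$ is the same.

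The only point that requires care is the reduction of the Gromov products to integers, which rests on the convention that the root is a vertex. Should the root be allowed to lie in the interior of an edge, the divergence distances would split into two offset arithmetic progressions whose consecutive ratios need not all exceed $\e$, and the value $\delta=\e$ would no longer suffice. I therefore expect the main (if modest) obstacle to be making the vertex-root hypothesis explicit and confirming that under it every branch point, and hence every Gromov product, sits at integer distance from $v$.
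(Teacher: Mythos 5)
Your proposal is correct and follows exactly the paper's (implicit) route: the paper derives this corollary by combining the preceding corollary for pseudo-discrete end metrics with the unproved remark that the end space metric of a rooted geodesically complete simplicial tree is pseudo-discrete with $\delta=\e$. Your only addition is to actually verify that remark (Gromov products are non-negative integers because branch points are vertices and the root is a vertex, so $d_v$ takes values in $\{\e^{-n}\}$), together with a sensible caveat about the root-is-a-vertex convention, which is indeed the convention the paper tacitly uses.
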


This is not true in general for rooted geodesically complete \br--trees as we saw in Example \ref{ejp1}.










\begin{definicion} An $\br$-tree $T$ is \emph{bushy} if there is
a constant $K>0$, called a {\it bushy constant}, such that for any point $x\in T$ there is
a point $y\in T$ such that $d(x,y)< K$ and $T\backslash \{y\}$ has at least
$3$ unbounded components.
\end{definicion}

The following result is from \cite{Hug-M-M}.

\begin{teorema}
\label{cor:trees} 
A homeomorphism $h\co end(T,v)\to end(T',w)$ between the end spaces
of rooted, geodesically complete, simplicial, bushy $\mathbb{R}$--trees is
PQ-symmetric if and only if $h$ is
a bounded distortion equivalence.
\end{teorema}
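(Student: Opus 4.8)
The plan is to prove the two implications separately; the forward direction (PQ-symmetric $\Rightarrow$ bounded distortion equivalence) is essentially formal, while the converse carries all the content. For the forward direction, suppose $h$ is PQ-symmetric with control function $\eta(t)=q\max\{t^p,t^{1/p}\}$. To bound $D_h(x,\varepsilon)$ I would take arbitrary $a,b\in S(x,\varepsilon)$, so that $d_v(x,a)=1\cdot d_v(x,b)$, and apply quasi-symmetry at $t=1$: this gives $d_w(h(x),h(a))\leq \eta(1)\,d_w(h(x),h(b))=q\,d_w(h(x),h(b))$, and symmetrically, whence $D_h(x,\varepsilon)\leq q$ uniformly in $x$ and $\varepsilon$. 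Since the inverse of a power quasi-symmetric homeomorphism is again power quasi-symmetric (a standard fact), the same estimate applied to $h^{-1}$ bounds $D_{h^{-1}}$, so $h$ is a bounded distortion equivalence. Note that this direction uses neither the simplicial nor the bushy hypothesis.

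For the converse, assume $h$ and $h^{-1}$ both have distortion $\leq K$. I would exploit that the end metric of a geodesically complete simplicial tree is pseudo-discrete with $\delta=\e$ (as recorded above): with the root a vertex, all Gromov products $(F|G)_v$ are nonnegative integers, so every distance in $end(T,v)$ and $end(T',w)$ lies in $\{\e^{-k}:k\in\bz\}$. The strategy is to show that the level function $(h(x)|h(y))_w$ is comparable, up to an affine change whose constants depend only on $K$, to $(x|y)_v$, and then to translate via $d=\e^{-(\cdot|\cdot)}$ into a power control function. This reduces the quasi-symmetry inequality to comparing $(h(x)|h(a))_w$ with $(h(x)|h(b))_w$ in the case $(x|a)_v\geq (x|b)_v$.

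The core is a telescoping argument across scales. Fix $x$ and ends $a,b$ with $m=(x|a)_v\geq n=(x|b)_v$, and build a chain of ends $c_n,\dots,c_m=a$ with $(x|c_j)_v=j$, each branching off the common ray $[F|G]$-type segment at level $j$; geodesic completeness together with bushiness (branching within a uniformly bounded number of levels) guarantees such branch ends exist at a controlled density of scales, i.e.\ the spheres used in the chain are nonempty (uniform perfectness). On each fixed sphere $S(x,\e^{-j})$ the bound $D_h\leq K$ says the integers $(h(x)|h(y))_w$ vary by at most $\lfloor\ln K\rfloor$. The key step is to bound the cross-scale increment $(h(x)|h(c_{j+1}))_w-(h(x)|h(c_j))_w$ above and below by constants depending only on $K$: the upper bound combines $D_h\leq K$ with the discreteness of the target metric, and the lower (monotonicity) bound uses $D_{h^{-1}}\leq K$ to prevent images of deeper balls from escaping shallower ones. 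Summing over $j=n,\dots,m$ yields $c_1(m-n)-c_2\leq (h(x)|h(a))_w-(h(x)|h(b))_w\leq c_3(m-n)+c_4$.

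Writing $t=d_v(x,a)/d_v(x,b)=\e^{-(m-n)}$ and exponentiating this linear estimate converts it into $d_w(h(x),h(a))\leq q\,t^{p}\,d_w(h(x),h(b))$ for $t\leq 1$, with $p,q$ depending only on $K$; the symmetric argument, using $h^{-1}$, handles $t\geq 1$, and combining the two gives exactly $\eta(t)=q\max\{t^p,t^{1/p}\}$. I expect the main obstacle to be precisely the cross-scale step, since bounded distortion is intrinsically a single-radius condition and by itself says nothing about how image radii at different scales compare. What upgrades mere quasi-symmetry to the power form is the interplay of three ingredients: pseudo-discreteness turns the multiplicative distortion bound into an additive bound on integer Gromov levels; bushiness/uniform perfectness ensures the telescoping chain never skips scales uncontrollably; and applying the distortion hypothesis to $h^{-1}$ as well forces level-monotonicity, so the per-scale increments are two-sided bounded and accumulate linearly rather than merely in some unspecified fashion.
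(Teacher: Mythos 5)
First, a point of comparison: the paper itself gives no proof of this statement --- it is imported verbatim from \cite{Hug-M-M} --- so your attempt can only be measured against what such a proof must contain. Your forward direction is correct and complete: taking $t=1$ in the quasi-symmetry condition bounds every sphere distortion by $\eta(1)=q$, and the standard fact that the inverse of a PQ-symmetric homeomorphism is again PQ-symmetric handles $h^{-1}$; as you say, neither simplicial nor bushy is needed there. The converse, however, has a genuine gap, and it sits exactly at the step you yourself flag as the crux. You claim the cross-scale increment $(h(x)|h(c_{j+1}))_w-(h(x)|h(c_j))_w$ is bounded above by ``combining $D_h\leq K$ with the discreteness of the target metric.'' Those two hypotheses cannot suffice. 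Concretely, let $T$ be the rooted binary tree and $T'$ the tree with the same combinatorics but with edges stretched so that branchings occur at depths $j^2$; the combinatorial identification induces $h$ with $(h(F)|h(G))_w=\bigl((F|G)_v\bigr)^2$. Both end metrics are discrete (all distances of the form $\e^{-k}$), every sphere maps onto a single sphere, so $D_h\equiv 1\equiv D_{h^{-1}}$, yet the increments $(j+1)^2-j^2=2j+1$ are unbounded and $h$ is not even quasi-symmetric. This is not a counterexample to the theorem only because $T'$ fails to be bushy --- which shows that bushiness of the \emph{target} tree must be invoked precisely in this step, whereas your sketch uses bushiness only to build the chain $c_j$ in the source. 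The missing mechanism is: if the image gap between consecutive chain levels were larger than a constant, bushiness of $T'$ would produce many ends of $T'$ branching off the ray $h(x)$ at distinct integer levels inside that gap; quasi-monotonicity (which does follow from bounded distortion plus the ultrametric isosceles property) pins all their pullbacks under $h^{-1}$ into a window of boundedly many source levels around $j$; by pigeonhole, many of these pullbacks lie on one source sphere $S(x,\e^{-k})$, and then $D_h(x,\e^{-k})\leq K$ caps the number of distinct image levels they can occupy, a contradiction.

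A second, related insufficiency concerns your lower bound. You describe it as monotonicity (``prevent images of deeper balls from escaping shallower ones''), which yields only that the increments are bounded below by $-O(\ln K)$, hence that the total image gap is bounded below by a constant independent of $m-n$. Summing such increments gives $d_w(h(x),h(a))\leq q\, d_w(h(x),h(b))$ whenever $d_v(x,a)\leq d_v(x,b)$: that is weak quasi-symmetry, not the power bound $q\,t^{1/p}$, which requires the image gap to grow \emph{linearly} in the source gap $m-n$. The only route to that linear growth is to apply the cross-scale upper bound to $h^{-1}$ (using bushiness of the source tree this time), so both halves of your key estimate rest on the pigeonhole-through-bushiness argument that is absent from the proposal. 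As written, your argument establishes at most weak quasi-symmetry of $h$ and $h^{-1}$; the upgrade to the power form --- the entire content of the theorem --- is asserted rather than proved.
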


A map $f\co (T,v) \to (T',w)$ is a rooted isometry if it is an isometry and $f(v)=w$. If such a map exists, 
we say that $(T,v)$ and $(T',w)$ are rooted isometric, which defines an equivalence relation.

Let us consider, from now on, the category $\mathcal{T}$ of rooted isometry classes of rooted geodesically complete simplicial bushy trees. 

Given $(S,x)\in \mathcal{T}$, let $[(S,x)]$ be the class of rooted geodesically complete \br--trees quasi-isometric to $(S,x)$ (i.e. whose end space is bounded distortion equivalent to $end(S,x)$).

The extended version, allowing the image of $\varrho$ to be $\infty$ together with Theorem \ref{cor:trees}, implies the following.

\begin{prop} $\varrho((T,v),(T',w))<\infty$ if and only if $end(T,v)$ and $end(T',w)$ are PQ-symmetric. \qed
\end{prop}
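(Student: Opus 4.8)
The plan is to read the statement off as an immediate consequence of Theorem \ref{cor:trees} combined with the extended definition of $\varrho$. The guiding observation is that, for the trees under consideration, the condition $\varrho((T,v),(T',w))<\infty$ is exactly the condition that $end(T,v)$ and $end(T',w)$ be bounded distortion equivalent; the whole proposition then reduces to translating ``bounded distortion equivalent'' into ``PQ-symmetric'' via Theorem \ref{cor:trees}.

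First I would unpack the extended $\varrho$. By construction $\varrho((T,v),(T',w)) = \ln(1 + 2\ln(\mathcal{K}_{(T,v),(T',w)}))$ whenever the two end spaces are bounded distortion equivalent, and in that case $\mathcal{K}_{(T,v),(T',w)}\geq 1$ is finite, being the infimum of a nonempty set of finite constants $K\geq 1$; hence $\varrho$ is finite. The extended convention assigns the value $\infty$ precisely when the end spaces are homeomorphic but not bounded distortion equivalent (and a fortiori there is no finite value available when no homeomorphism exists at all). Therefore $\varrho((T,v),(T',w))<\infty$ holds if and only if there exists a bounded distortion equivalence $h\co end(T,v)\to end(T',w)$.

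Next I would invoke Theorem \ref{cor:trees}. Since $(T,v)$ and $(T',w)$ are rooted, geodesically complete, simplicial, bushy $\br$--trees, that theorem asserts that a homeomorphism $h\co end(T,v)\to end(T',w)$ is PQ-symmetric if and only if it is a bounded distortion equivalence. For the forward implication, from $\varrho<\infty$ I extract a bounded distortion equivalence $h$, which the theorem upgrades to a PQ-symmetric homeomorphism, so the end spaces are PQ-symmetric. For the reverse implication, a PQ-symmetric homeomorphism is, by the theorem, a bounded distortion equivalence, so the end spaces are bounded distortion equivalent and the first step gives $\varrho<\infty$.

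The only place requiring genuine care — and the spot I expect to be the main, though modest, obstacle — is the matching of existential quantifiers: both ``PQ-symmetric'' and ``bounded distortion equivalent'' are existence statements about a single homeomorphism, whereas $\mathcal{K}_{(T,v),(T',w)}$ is an infimum taken over all admissible homeomorphisms. I would stress that Theorem \ref{cor:trees} operates homeomorphism-by-homeomorphism, so the existence of one PQ-symmetric homeomorphism is equivalent to the existence of one bounded distortion equivalence, and no uniform control over the infimum is needed. I would also record that a PQ-symmetric map between these end spaces is in particular a homeomorphism, so the homeomorphism hypothesis implicit in the extended definition of $\varrho$ is automatically met in the reverse direction, and the equivalence closes.
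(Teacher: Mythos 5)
Your proposal is correct and follows exactly the paper's route: the paper states this proposition with no written proof, presenting it as an immediate consequence of the extended definition of $\varrho$ (finite precisely when the end spaces are bounded distortion equivalent) together with Theorem \ref{cor:trees}. Your spelling out of the quantifier matching — that both notions are existence statements about a single homeomorphism, so no control on the infimum $\mathcal{K}_{(T,v),(T',w)}$ is needed — is a worthwhile clarification of the same argument, not a different one.
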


Let us recall that, from \cite{BoSchr} (see also \cite{M}), as a particular case we have that

\begin{prop} $end(T,v)$ and $end(T',w))$ are PQ-symmetric if and only if $(T,v)$ and $(T',w)$ are quasi-isometric.\qed
\end{prop}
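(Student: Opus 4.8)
The plan is to prove both implications through the standard dictionary between an $\br$-tree and its ultrametric end space, exploiting that a tree is $0$-hyperbolic and that the end space metric $d_v(F,G)=\e^{-(F|G)_v}$ lies in the canonical gauge of the boundary. Throughout I identify $(F|G)_v$ with the length of the arc $[F|G]$, i.e. with the distance from $v$ to the branch point where the rays $F$ and $G$ separate.

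For the forward implication, given a quasi-isometry $f\co(T,v)\to(T',w)$ with constants $\lambda,A$, I would first produce the induced boundary map $\partial f$: since $f$ carries each geodesic ray to a quasi-geodesic ray and quasi-geodesics in a $0$-hyperbolic space stay within bounded Hausdorff distance of a genuine geodesic (Morse stability), each $F\in end(T,v)$ determines a unique $\partial f(F)\in end(T',w)$, and a quasi-inverse makes $\partial f$ a bijection. The decisive point is then to control the \emph{difference} of Gromov products rather than each product separately. Given three rays $x,a,b$, the ultrametric isosceles condition $(x|a)_v=(a|b)_v\le(x|b)_v$ (after relabelling) says that the two branch points involved both lie on the ray $x$, so the segment joining them has length $\bigl|(x|a)_v-(x|b)_v\bigr|$ and its $f$-image is a quasi-geodesic segment of controlled length. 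This yields
\[
\bigl|(\partial f(x)|\partial f(a))_w-(\partial f(x)|\partial f(b))_w\bigr|\le \lambda\,\bigl|(x|a)_v-(x|b)_v\bigr|+C,
\]
with $C=C(\lambda,A)$, where both products on the left are read off the \emph{same} consistently traversed ray $f(x)$; this is why the spurious $(\lambda-\tfrac1\lambda)$-term that a naive pair-by-pair estimate would generate never appears. Exponentiating and splitting into the regimes $t\ge 1$ and $t\le 1$ converts the hypothesis $d_v(x,a)\le t\,d_v(x,b)$ into $d_w(\partial f(x),\partial f(a))\le q\max\{t^{\lambda},t^{1/\lambda}\}\,d_w(\partial f(x),\partial f(b))$ with $q=\e^{C}$; that is, $\partial f$ is PQ-symmetric with $p=\lambda$.

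For the converse I would reconstruct a quasi-isometry from a PQ-symmetric homeomorphism $h\co end(T,v)\to end(T',w)$ with control $\eta(t)=q\max\{t^p,t^{1/p}\}$. Each $x\in T$ with $\|x\|=r$ is encoded by its shadow $\{F : (F|F_0)_v\ge r\}=B(F_0,\e^{-r})$ for any ray $F_0$ through $x$, and such balls recover the branch points of $T$. A quasi-symmetric map sends balls to sets squeezed between concentric balls whose radii are comparable through $\eta$, so I would set $\tilde h(x)$ to be the branch point of $T'$ whose shadow is concentric with $h\bigl(B(F_0,\e^{-r})\bigr)$, interpolating along edges to define $\tilde h$ on all of $T$. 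The power form of $\eta$ forces $\|\tilde h(x)\|$ to lie between $r/p$ and $pr$ up to an additive error depending on $q$, and reading the tree metric off ball containments (which $h$ preserves coarsely) gives $\tfrac1\lambda d-A\le d'(\tilde h x,\tilde h x')\le\lambda d+A$ with $\lambda=p$; surjectivity of $h$ makes $\tilde h(T)$ a net.

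The main obstacle is the converse: the end space records only the asymptotic structure, so turning boundary data into a map controlled on \emph{all} of $T$ requires that the ball-to-ball correspondence be uniform in $x$ and that the additive slack in ``ball maps to comparable ball'' not accumulate along long segments. This is exactly the content of the hyperbolic-cone extension theorem of Bonk and Schramm \cite{BoSchr}, whose cone over an ultrametric end space is quasi-isometric to the tree itself, with the direct tree version carried out in \cite{M}. The forward direction is, by contrast, the tree specialisation of \cite[Theorem 6.5]{BoSchr}, the one genuinely new observation being that the tree structure lets one estimate the \emph{difference} of Gromov products by a single segment, which is precisely what produces the clean power control function.
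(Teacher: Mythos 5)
Your proposal is correct and takes essentially the same route as the paper: the paper states this proposition with no proof at all, simply citing Bonk--Schramm \cite{BoSchr} (see also \cite{M}) for exactly the two facts you invoke, namely that the boundary map induced by a quasi-isometry is PQ-symmetric and that a PQ-symmetric boundary homeomorphism extends over the hyperbolic cone to give a quasi-isometry. Your tree-specific sketch (Morse stability, the branch-point estimate for differences of Gromov products, shadows/balls for the converse) fills in details that the paper delegates entirely to those references.
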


Let us consider the restriction of the category $\mathcal{T}$ to rooted geodesically complete simplicial trees with valence at least 3 at each vertex, $\mathcal{T}_{\geq 3}$. Let $[(R,z)]\subset \mathcal{T}_{\geq 3}$ be the class of trees quasi-isometric to $(R,z)$ (i.e. whose end space is bounded distortion equivalent to $end(R,z)$).

\begin{prop}\label{isom} Given $(T,v),(T',w)\in [(R,z)]$, $\varrho((T,v),(T',w))=0$ if and only if there is an isometry between $end(T,v)$ and $end(T',w)$. 
\end{prop}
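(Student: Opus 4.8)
The plan is to reduce the statement to the branching characterization already available and then to upgrade the mere preservation of branching to a genuine preservation of distances, the latter step being where the hypothesis of valence at least $3$ is used. Since $(T,v),(T',w)\in\mathcal{T}_{\geq 3}$ are simplicial and geodesically complete, their end metrics $d_v,d_w$ are pseudo-discrete with $\delta=\e$, so Theorem \ref{prop:branching} applies and gives $\varrho((T,v),(T',w))=0$ if and only if the trees have the same branching. It therefore suffices to show that the existence of an isometry between $end(T,v)$ and $end(T',w)$ is equivalent to the trees having the same branching. One implication is immediate and needs no valence hypothesis: an isometry $h\co end(T,v)\to end(T',w)$ satisfies $(h(F)|h(G))_w=(F|G)_v$ for all $F,G$, hence preserves the order of Gromov products and in particular preserves the branching.

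For the converse, let $h\co end(T,v)\to end(T',w)$ be a branching-preserving bijection; the task is to show that $h$ preserves the actual values of the Gromov products, not merely their order. The key observation, where the valence hypothesis enters, is that for every end $F_0$ the realized-value set $\{(F_0|G)_v\mid G\in end(T,v),\,G\neq F_0\}$ equals the set of non-negative integers $\{0,1,2,\dots\}$. Indeed, in a simplicial tree branching occurs only at vertices, which (the root being a vertex) sit at integer levels, so no non-integer value is realized; and for each $n$ the vertex $F_0(n)$ has valence at least $3$, so besides the edge toward the root and the edge continuing $F_0$ it carries a further edge, which geodesic completeness extends to an end $G$ with $(F_0|G)_v=n$. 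The identical statement holds in $(T',w)$ for the base end $h(F_0)$.

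Granting this, fix $F_0$. Because $h$ and $h^{-1}$ both preserve branching, the assignment $(F_0|G)_v\mapsto (h(F_0)|h(G))_w$ is a well-defined, strictly order-preserving bijection $\phi$ between the two realized-value sets, each of which is $\{0,1,2,\dots\}$ by the previous paragraph. The only order-automorphism of $\{0,1,2,\dots\}$ is the identity (it fixes the least element and then proceeds by induction), so $\phi=\mathrm{id}$ and hence $(h(F_0)|h(G))_w=(F_0|G)_v$ for every $G$. Since $F_0$ was arbitrary, $h$ preserves all Gromov products, i.e. $d_w(h(F),h(G))=d_v(F,G)$ for all $F,G$, and is therefore an isometry.

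The main obstacle is precisely the realized-value lemma of the second paragraph: the valence-at-least-$3$ condition is exactly what forces every integer level to occur as a Gromov product, leaving no gaps for an order-isomorphism to shift the levels. Without it, a vertex of valence $2$ would be invisible to the branching relation and the metric could not be recovered from the branching alone, as the non-simplicial Example \ref{ejp1} already signals. I expect the remaining points — verifying that both the equality and the strict-inequality clauses of the branching relation transfer to $h^{-1}$, and that the root may be taken to be a vertex — to be routine.
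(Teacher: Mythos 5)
Your proof is correct, but it takes a genuinely different route from the paper's. The paper argues directly: since the trees are simplicial, every homeomorphism between the end spaces has distortions in $\{e^k \mid k=0,1,\dots\}$, so $\mathcal{K}_{(T,v),(T',w)}=1$ forces the existence of an actual homeomorphism $h$ with $D_h(F,r)=1$ for all $F$ and $r$; it then proves by induction on the (integer) value of the Gromov product that $h$ preserves Gromov products, using the valence-$\geq 3$ hypothesis at each level to produce a witness branch $H'$ splitting off at exactly that level, whose preimage under $h^{-1}$ yields a contradiction with $D_h=1$. You instead factor through the branching characterization: pseudo-discreteness of the end metrics (with $\delta=\e$) plus Theorem \ref{prop:branching} converts $\varrho=0$ into a branching-preserving bijection, and your realized-value lemma (valence $\geq 3$ plus geodesic completeness make every non-negative integer, and only integers, occur as a Gromov product against any fixed end) reduces the upgrade to the rigidity of order-automorphisms of $\{0,1,2,\dots\}$. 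The two arguments consume the same two ingredients --- discreteness of levels from the simplicial structure, and existence of branches at every level from valence $\geq 3$ --- but your order-automorphism step replaces the paper's induction-with-contradiction and isolates a statement of independent interest (in $\mathcal{T}_{\geq 3}$ the branching alone determines the end metric), whereas the paper's version is self-contained in that it does not lean on Theorem \ref{prop:branching}, whose own proof is the more delicate part of the paper. Two minor remarks: for the easy direction you need not route through branching at all, since an isometry is a homeomorphism with all distortions equal to $1$, giving $\mathcal{K}=1$ directly; and both your proof and the paper's silently assume the root is a vertex, which is what allows level $0$ to be realized.
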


\begin{proof} The if part is obvious. 

Suppose that $\varrho((T,v),(T',w))=0$. Since the trees are simplicial $D_g(F,r)\in \{e^{k}\, | \, k=0,1,... \}$ for any homeomorphism $g$ and any radius $r>0$. Therefore, there is a homeomorphism $h\co end(T,v)\to end(T',w)$ such that $D_h(F,r)=1$ for any $F\in end(T,v)$ and any $r>0$. Let us prove that $h$ is an isometry by induction on the Gromov product. 

Let $F,G\in end(T,v)$ such that $(F|G)_v=0$, i.e., $d_v(F,G)=1$ and suppose $(h(F)|h(G))_w\geq 1$. Since the minimal vertex degree is 3, there is a point $H'\in end(T',v)$ such that $(h(F)|H')_w=0$ and $(H'|h(G))_w=0$. Since $D_{h^{-1}}(H',1)=1$, necessarily $(F|h^{-1}(H'))_v=0$ and $(h^{-1}(H')|G)_v=0$ but then $D_h(F,1)\geq e$ leading to contradiction. The same works for $h^{-1}$.

Now suppose that for every $k<n$ and every pair of branches $F_1,G_1\in end(T,v)$ (resp. $F'_1,G'_1\in end(T',w)$) such that $(F_1|G_1)_v=k$ (resp. $(F'_1|G'_1)_w=k$) then $(h(F_1)|h(G_1))_w=k$ (and $(h^{-1}(F'_1)|h^{-1}(G'_1))_w=k$). Let $F_2,G_2\in end(T,v)$ such that $(F_2|G_2)_v=n$ and suppose that $(h(F_2)|h(G_2))_w\neq n$. If $(h(F_2)|h(G_2))_w< n$ then, the assumption on $h^{-1}$ would imply that $(F_2|G_2)_v<n$. Hence $(h(F_2)|h(G_2))_w>n$ and since $T'$ has minimal vertex degree 3, there exist some $H'$ with $(h(F_2)|H')_w=n=(H'|h(G_2))_w$. It is immediate to check that $(F_2|h^{-1}(H'))_v=n=(h^{-1}(H')|G_2)_v$ and this leads to a contradiction with $D_h(F_2,e^{-n})=1$.
\end{proof}

Let us recall the following corollary in
\cite{Hug}.

\begin{prop}\label{cor_H} Two geodesically complete rooted $\mathbb{R}$--trees,
$(T,v)$ and $(S,w)$, are rooted isometric if and only if
$end(T,v)$ and $end(S,w)$ are isometric.
\end{prop}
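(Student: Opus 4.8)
The plan is to reconstruct each geodesically complete rooted tree from the Gromov products on its end space, observe that the tree metric is entirely determined by those products, and then transport an end-space isometry into a rooted tree isometry.

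The forward implication is direct and I would dispatch it first. Given a rooted isometry $f\co (T,v)\to (S,w)$, post-composition sends an end $F\in end(T,v)$ to $f\circ F\in end(S,w)$; since $f$ is an injective isometry with $f(v)=w$, we have $f\circ F(t)=f\circ G(t)$ if and only if $F(t)=G(t)$, so $(f\circ F|f\circ G)_w=(F|G)_v$. Hence $F\mapsto f\circ F$ preserves $d_v=\e^{-(F|G)_v}$ and is an isometry of end spaces.

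For the converse I would first record the representation of the tree in terms of its ends, which is where geodesic completeness enters. For any $x\in T$ the arc $[v,x]$ extends to an isometric embedding of $[0,\infty)$, so $x=F(\|x\|)$ for some $F\in end(T,v)$; thus $(F,t)\mapsto F(t)$ maps $end(T,v)\times[0,\infty)$ onto $T$, and $F(t)=G(s)$ exactly when $t=s\leq (F|G)_v$. Identifying $T$ with this quotient, a short computation in the tree gives the metric formula
$$d(F(t),G(s))=t+s-2\min\{t,s,(F|G)_v\},$$
so both the tree metric and the distinguished root $v=F(0)$ are completely determined by the numbers $(F|G)_v$, equivalently by $d_v$. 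Now, given an isometry $\phi\co end(T,v)\to end(S,w)$, it preserves $d_v$ and hence all Gromov products, and I would define $f\co T\to S$ by $f(F(t))=\phi(F)(t)$. The remaining work is to check that $f$ is a well-defined rooted isometry: well-definedness holds because $\phi$ preserves Gromov products, so $t\leq (F|G)_v$ forces $t\leq (\phi(F)|\phi(G))_w$; the isometry property follows by applying the displayed formula in both trees; $f(v)=w$ because $v$ is the class of $(F,0)$; and surjectivity follows from surjectivity of $\phi$ together with geodesic completeness of $S$.

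The step I expect to be the main obstacle is the reconstruction lemma of the third paragraph, namely establishing cleanly---via geodesic completeness---that every point lies on an end and proving the metric formula, since this is exactly what guarantees that the end-space metric carries all the information of the rooted tree. Once that is in place, transporting $\phi$ across is a routine verification of the four conditions above.
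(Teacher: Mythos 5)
Your proof is correct, but it follows a different route from the paper, which in fact offers no proof at all: the statement is quoted verbatim as a corollary of Hughes' categorical equivalence between geodesically complete rooted $\mathbb{R}$--trees and complete ultrametric spaces of diameter $\leq 1$ (the reference \cite{Hug} cited just above the proposition). What you have done is essentially unpack the inverse functor of that equivalence by hand in the special case of isometries: your identification of $T$ with the quotient of $end(T,v)\times[0,\infty)$ under $(F,t)\sim(G,s)$ iff $t=s\leq (F|G)_v$, together with the formula $d(F(t),G(s))=t+s-2\min\{t,s,(F|G)_v\}$, is exactly Hughes' reconstruction of a tree from its end space, and your transport of $\phi$ to $f(F(t))=\phi(F)(t)$ is the morphism part of that functor. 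The trade-off is the expected one: the citation buys generality (Hughes' framework handles a whole category of maps, not just isometries, which is why the paper can also invoke it elsewhere), whereas your argument is elementary and self-contained, using geodesic completeness exactly once --- to guarantee every point of the tree lies on an end, so the Gromov products determine the whole rooted metric. Two trivial points you pass over silently: the degenerate case $T=\{v\}$ (empty end space), which must be matched with $S=\{w\}$, and the fact that $\{t : F(t)=G(t)\}$ is a closed initial interval (unique arcwise connectedness), which is what justifies ``coincide up to $(F|G)_v$, then diverge'' behind your metric formula; both are one-line checks and do not affect correctness.
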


From Propositions \ref{isom} and \ref{cor_H} we obtain:

\begin{cor}\label{cor:isometry} Given $(T,v),(T',w)\in [(R,z)]$, $\varrho((T,v),(T',w))=0$ if and only if there is a rooted isometry between $(T,v)$ and $(T',w)$. 
\end{cor}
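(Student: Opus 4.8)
The plan is to obtain Corollary~\ref{cor:isometry} as an immediate consequence of the two propositions cited just before it, so the work is really a short chain of equivalences rather than a fresh argument. First I would unwind the definitions: for $(T,v),(T',w)\in[(R,z)]$ the two trees lie in the same quasi-isometry class, hence $end(T,v)$ and $end(T',w)$ are bounded distortion equivalent and in particular $\varrho((T,v),(T',w))$ is a well-defined finite number. Then I would apply Proposition~\ref{isom}, which says that within this class $\varrho((T,v),(T',w))=0$ holds if and only if there is an \emph{isometry} between the end spaces $end(T,v)$ and $end(T',w)$.

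Next I would invoke Proposition~\ref{cor_H} (Hughes' corollary), which states that two geodesically complete rooted $\br$--trees are rooted isometric precisely when their end spaces are isometric. Since trees in $\mathcal{T}_{\geq 3}$ are in particular geodesically complete rooted simplicial trees, this equivalence applies to $(T,v)$ and $(T',w)$. Chaining the two biconditionals gives
\[
\varrho((T,v),(T',w))=0 \iff end(T,v)\cong end(T',w) \text{ (isometric)} \iff (T,v)\cong(T',w)\text{ (rooted isometric)},
\]
which is exactly the claimed statement.

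The conceptual content therefore sits entirely in Proposition~\ref{isom}, whose proof is already given; Corollary~\ref{cor:isometry} only transports its conclusion from end spaces to trees via Hughes' result. The one point I would be careful about is checking that the hypotheses of both propositions are genuinely met: Proposition~\ref{isom} is stated for members of $[(R,z)]\subset\mathcal{T}_{\geq 3}$, so I must confirm that $(T,v)$ and $(T',w)$ satisfy the minimal vertex degree~$3$ condition used in its induction, while Proposition~\ref{cor_H} needs only geodesic completeness, which is part of the standing hypotheses on the category $\mathcal{T}$. I do not expect any real obstacle here: the minimal degree~$3$ assumption is built into $\mathcal{T}_{\geq 3}$, and geodesic completeness is part of the definition of the ambient category, so both propositions apply verbatim and the corollary follows by composing the equivalences.
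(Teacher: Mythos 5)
Your proposal is correct and matches the paper exactly: the paper derives Corollary~\ref{cor:isometry} as an immediate consequence of Propositions~\ref{isom} and~\ref{cor_H}, which is precisely the chain of equivalences you describe. Your additional care in verifying that the hypotheses of both propositions apply is sound but unproblematic, just as you anticipated.
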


\begin{nota} Note that this distance depends on the ramification of the tree and not on its isometry type. Therefore, the root plays an important role and it is immediate to check that from the same tree with two different roots $(T,v)$, $(T,w)$ it may happen that $\varrho((T,v),(T,w))>0$. Nevertheless, it can be bounded above by a constant depending on the distance between the roots since $\varrho((T,v),(T,w))\leq ln(1+d(v,w))$.
\end{nota}

\begin{prop}\label{metric} $\varrho$ is a metric in $[(R,z)]$.
\end{prop}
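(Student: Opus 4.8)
The plan is to verify the three metric axioms on the set $[(R,z)]$ of rooted isometry classes, the only substantial one being the triangle inequality. First I would record the cheap facts. Since $D_f(x,\varepsilon)\ge 1$ always, every admissible $K$ satisfies $K\ge 1$, so $\mathcal{K}_{(T,v),(T',w)}\ge 1$ and $\varrho\ge 0$. The identity map gives $\varrho((T,v),(T,v))=0$, and by Corollary \ref{cor:isometry} one has $\varrho((T,v),(T',w))=0$ exactly when the two trees are rooted isometric, i.e.\ equal as elements of $[(R,z)]$; this is the identity-of-indiscernibles axiom. Symmetry is built into the definition of $\mathcal{K}$: the homeomorphisms $U'\to U$ are precisely the inverses $f^{-1}$ of homeomorphisms $f\co U\to U'$, and $\max(\|D_{f^{-1}}\|,\|D_f\|)$ is unchanged under $f\leftrightarrow f^{-1}$, so $\mathcal{K}_{(T',w),(T,v)}=\mathcal{K}_{(T,v),(T',w)}$ and $\varrho$ is symmetric. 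Here $\|D_f\|$ abbreviates $\sup_x\sup_\varepsilon D_f(x,\varepsilon)$.

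The heart is the triangle inequality, which I would deduce from a single submultiplicativity estimate for $\mathcal{K}$: for any third tree $(T'',u)\in[(R,z)]$, $\mathcal{K}_{(T,v),(T',w)}\le \mathcal{K}_{(T,v),(T'',u)}\cdot\mathcal{K}_{(T'',u),(T',w)}$. Granting this, the particular form $\varrho=\ln(1+2\ln\mathcal{K})$ does the rest: writing $\alpha=2\ln\mathcal{K}_{(T,v),(T'',u)}\ge 0$ and $\beta=2\ln\mathcal{K}_{(T'',u),(T',w)}\ge 0$, submultiplicativity gives $2\ln\mathcal{K}_{(T,v),(T',w)}\le \alpha+\beta$, whence, by monotonicity of $t\mapsto\ln(1+t)$ and $(1+\alpha)(1+\beta)=1+\alpha+\beta+\alpha\beta\ge 1+\alpha+\beta$,
\[\varrho((T,v),(T',w))\le \ln(1+\alpha+\beta)\le\ln\big((1+\alpha)(1+\beta)\big)=\varrho((T,v),(T'',u))+\varrho((T'',u),(T',w)).\]

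To prove submultiplicativity I would take homeomorphisms $f\co end(T,v)\to end(T'',u)$ and $g\co end(T'',u)\to end(T',w)$ that are almost optimal for the two factors (distortion in both directions at most $\mathcal{K}_{(T,v),(T'',u)}+\eta$ and $\mathcal{K}_{(T'',u),(T',w)}+\eta$), and estimate the distortion of the composite $g\circ f$; since the spaces are simplicial the relevant distortion exponents are integers, so it suffices to show $\|D_{g\circ f}\|\le\|D_f\|\cdot\|D_g\|$ together with the same bound for the inverse. Fixing a centre $x_0$ and radius $\varepsilon$, the bound on $f$ forces $f(S(x_0,\varepsilon))$ into an annulus about $f(x_0)$ of radius-ratio $\le\|D_f\|$; being ultrametric, this annulus is a finite union of spheres $S(f(x_0),s_j)$ of $end(T'',u)$, and on each of them $g$ distorts by at most $\|D_g\|$. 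What remains is to compare the images of these different spheres, i.e.\ to bound $\max_j\tau_j/\min_j\tau_j$, where $\tau_j$ is the inner radius of $g(S(f(x_0),s_j))$. Here the ultrametric ``isosceles'' trick applies: points lying on distinct spheres around $f(x_0)$ lie on a common sphere around a third point, so the distortion bound for $g$ yields a coarse radial monotonicity $\tau_j\le\|D_g\|\,\tau_i$ whenever $s_j<s_i$.

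The main obstacle is exactly this last comparison: a priori a bounded-distortion equivalence may stretch radially by a power, and across an annulus of ratio $\|D_f\|$ such stretching could inflate $\max_j\tau_j/\min_j\tau_j$ beyond $\|D_f\|$ and so destroy submultiplicativity. The point I would have to exploit is that our trees are simplicial of minimal vertex degree $3$, so Gromov products are integers and a bounded-distortion equivalence (in both directions) cannot perform nontrivial level-rescaling: a genuine radial power would send pairs branching at integer levels to pairs branching at non-integer levels, contradicting surjectivity onto a space whose branching occurs at every integer level. Pinning down that this integer rigidity, together with the two-sided bounds on $f$ and $g$, forces $\max_j\tau_j/\min_j\tau_j\le\|D_f\|$, and hence $\|D_{g\circ f}\|\le\|D_f\|\,\|D_g\|$, is the delicate step; once it is in place the three axioms above combine to show that $\varrho$ is a metric on $[(R,z)]$.
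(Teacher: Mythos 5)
Your handling of the easy axioms (non-negativity, symmetry, and the zero-distance axiom via Corollary \ref{cor:isometry}) agrees with the paper, and your reduction of the triangle inequality to the submultiplicativity estimate $\mathcal{K}_{(T,v),(T',w)}\le \mathcal{K}_{(T,v),(T'',u)}\cdot \mathcal{K}_{(T'',u),(T',w)}$ is algebraically correct. The gap is that the composite bound you base it on, $\|D_{g\circ f}\|\le\|D_f\|\cdot\|D_g\|$, is false, and the ``integer rigidity'' you invoke cannot rescue it. Integer rigidity rules out \emph{exact} radial powers (that is essentially Proposition \ref{isom}: distortion $1$ in both directions forces an isometry), but it does not rule out radial powers with one level of fuzz. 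Concretely, let $T$ be the simplicial tree all of whose vertices have degree $3$, fix $F\in end(T,v)$, and for $t\ge 0$ let $E_t$ be the set of ends branching from $F$ at level exactly $t$; each $E_t$ splits as $E_t^0\sqcup E_t^1$ according to the first edge taken after the branch vertex. Define $h\co end(T,v)\to end(T,v)$ by $h(F)=F$, $h|_{E_t^0}=$ the canonical similarity onto $E_{2t}$, and $h|_{E_t^1}=$ the canonical similarity onto $E_{2t+1}$. One checks directly that $h$ is a homeomorphism with $\sup_{G,\varepsilon}D_h(G,\varepsilon)=e$ and $\sup_{G,\varepsilon}D_{h^{-1}}(G,\varepsilon)=e$; in particular surjectivity is not violated, because the width-one windows $[2t,2t+1]$ cover every integer level, which is exactly the loophole your rigidity argument misses. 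But $h\circ h$ maps $E_t$ onto $E_{4t}\cup E_{4t+1}\cup E_{4t+2}\cup E_{4t+3}$, so $D_{h\circ h}(F,e^{-t})=e^3>e\cdot e$, and the $n$-fold composite has distortion $e^{2^n-1}$. So the quantity $\max_j\tau_j/\min_j\tau_j$ in your annulus argument genuinely can exceed $\|D_f\|$, and no multiplicative (i.e.\ additive in the exponents) composition law holds.

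This is precisely why the paper does not prove submultiplicativity. Its proof establishes the much weaker composition estimate $\ln\mathcal{K}_{(T,v),(T'',x)}\le \ln\mathcal{K}_1\cdot\ln\mathcal{K}_2+2\ln\mathcal{K}_2+1$ --- note the \emph{product} of logarithms, which the example above shows is essentially unavoidable --- and then recovers the triangle inequality from two facts your sketch never uses: since the trees are simplicial, $\ln\mathcal{K}$ takes values in $\{0,1,2,\dots\}$, so either $\mathcal{K}_1=1$ (treated as a separate, easy case) or $\ln\mathcal{K}_1\ge 1$; and the particular normalization $\varrho=\ln(1+2\ln\mathcal{K})$, whose factor $2$ is exactly what allows the product term $\ln\mathcal{K}_1\ln\mathcal{K}_2$ to be absorbed into the cross term of $(1+2\ln\mathcal{K}_1)(1+2\ln\mathcal{K}_2)$ once $\ln\mathcal{K}_1\ge1$. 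So while your overall architecture (reduce the triangle inequality to a composition estimate for $\mathcal{K}$) matches the paper's, the estimate you need is too strong to be true; the argument has to be run with the product-type bound plus the integrality of $\ln\mathcal{K}$, as the paper does.
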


\begin{proof} $\varrho$ is real valued and non negative by definition. By \ref{cor:isometry}, $\varrho((T,v),(T',w))=0$ if and only if $(T,v)=(T',w)$.

The symmetric property follows immediately from the definition. Hence, it suffices to check the triangle inequality.

Let $(T,v), (T',w),(T'',x)\in [(R,z)]$ with $$\varrho((T,v),(T',w))=d_1=ln(1+2ln(\mathcal{K}_{(T,v),(T',w)}))$$ and $$\varrho((T',w),(T'',x))=d_2=ln(1+2ln(\mathcal{K}_{(T',w),(T'',x)})).$$ To simplify the notation through the proof let us denote $\mathcal{K}_{(T,v),(T',w)}$ as $\mathcal{K}_1$ and $\mathcal{K}_{(T',w),(T'',x)}$ as $\mathcal{K}_2$.

Since $\mathcal{K}_1$ and $\mathcal{K}_2$ are greatest lower bounds, there is a homeomorphism $h_1\co end(T,v) \to end(T'w)$ such that $$\underset{F\in end(T,v)}\sup\, \underset{\varepsilon> 0}\sup\, D_{h_1}(F,\varepsilon) \leq \frac{e}{2}\mathcal{K}_1 \mbox{ and } \underset{F'\in end(T',w)}\sup\, \underset{\varepsilon> 0}\sup\, D_{h_1^{-1}}(F',\varepsilon) \leq \frac{e}{2}\mathcal{K}_1,$$ and a homeomorphism $h_2\co end(T',w) \to end(T'',x)$ with the corrosponding condition for $\mathcal{K}_2$. Also, since the trees are simplicial, for any homeomorphism $h$, $D_{h}(F,\varepsilon)$ takes values in $\{e^n \ | \ n\in \bn\}$. Hence, $D_{h_1}(F,\varepsilon)\leq \mathcal{K}_1$, $D_{h_1^{-1}}(F',\varepsilon) \leq \mathcal{K}_1$ and the same happens with $h_2$.

Case 1. $\mathcal{K}_1=1$. Then $\varrho((T,v),(T'',x))=\varrho((T,v),(T',w))+\varrho((T',w),(T'',x))=\varrho((T',w),(T'',x))$. It suffices to observe that for any $F\in end(T,v)$ and $G,H\in S(F,\varepsilon)$, there exists some $\varepsilon'>0$ such that $h_1(G),h_1(H)\in S(h_1(F),\varepsilon')$ and, assuming $d_x(h_2(h_1(F)),h_2(h_1(H)))\leq d_x(h_2(h_1(F)),h_2(h_1(G)))$, then $$\frac{d_x(h_2(h_1(F)),h_2(h_1(G)))}{d_x(h_2(h_1(F)),h_2(h_1(H)))}\leq \mathcal{K}_2.$$   

Case 2. If $\mathcal{K}_1>1$ then $\mathcal{K}_1\geq e$ and $ln\mathcal{K}_1\geq 1$. 

Suppose $F\in end(T,v)$ and $G,H\in S(F,\varepsilon)$ and let us assume that $$t_0=(h_1(G)|h_1(F))_w\leq (h_1(H)|h_1(F))_w=t_1.$$ Then, since $D_{h_1}(F,\varepsilon)\leq \mathcal{K}_{(T,v),(T',w)}$, $$t_1-t_0\leq ln(\mathcal{K}_1).$$

Claim: $$|(h_2(h_1)(F)|h_2(h_1(H)))_x-(h_2(h_1)(F)|h_2(h_1(G)))_x| \leq ln(\mathcal{K}_1)\cdot ln(\mathcal{K}_2) +2 ln(\mathcal{K}_2)+1.$$

If $(h_2(h_1)(F)|h_2(h_1(H)))_x < (h_2(h_1)(F)|h_2(h_1(G)))_x$, then, by the bounded distortion condition with respect to $h_1(G)$ we obtain that $(h_2(h_1)(F)|h_2(h_1(G)))_x-(h_2(h_1)(F)|h_2(h_1(H)))_x \leq ln(\mathcal{K}_2)$ proving the claim. Hence, we will assume that $(h_2(h_1)(F)|h_2(h_1(H)))_x \geq (h_2(h_1)(F)|h_2(h_1(G)))_x$.

Let $\mathcal{F}_i$ be the set of branches $F'\in end(T',w)$ such that $(h(F)|F')_w=t_0+i$ with $i=0,t_1-t_0$. For any $F'_1,F'_2 \in \mathcal{F}_i$, the bounded distortion condition with respect to $h_1(F)$ implies that $$|(h_2(h_1(F))|h_2(F'_1))_x-(h_2(h_1(F))|h_2(F'_2))_x|\leq ln(\mathcal{K}_2).$$ Thus, if \begin{equation}\label{bounded}(h_2(h_1)(F)|h_2(h_1(H)))_x-(h_2(h_1)(F)|h_2(h_1(G)))_x > (t_1-t_0+1)ln(\mathcal{K}_2)+C,\end{equation} since the vertices in $T''$ have valence at least 3, there are $C$ branches $F_j''$, $j=1,C$, in $(T'',x)$ such that $(h_2(h_1(F))|h_2(h_1(G))_x<(h_2(h_1(F))|F_j'')_x<(h_2(h_1(F))|h_2(h_1(H))_x$ which are not in $h_2(\mathcal{F}_i)$ for any $i=0,t_1-t_0$ and such that $(h_2(h_1(F))|F_j'')_x\neq (h_2(h_1(F))|F_i'')_x$ for any $j\neq i$. Suppose $C=ln(\mathcal{K}_2)+1$.

In this condition, either $(h_2(h_1)(F)|h_2(h_1(H)))_x -(h_2(h_1)(F)|h_2(h_1(G)))_x\leq 2C$, which would prove the claim, or there is some $F''_{j_0}$ such that $(h_2(h_1)(F)|h_2(h_1(H)))_x -(h_2(h_1)(F)|F''_{j_0})_x\geq C$ and $(h_2(h_1)(F)|F''_{j_0})_x -(h_2(h_1)(F)|h_2(h_1(G)))_x\geq C$ with $h_2^{-1}(F''_{j_0})\not \in \mathcal{F}_i$ for any $i$, which means that either $(h_1(F),h_2^{-1}(F''_{j_0}))_w<t_0$ or $(h_1(F),h_2^{-1}(F''_{j_0}))_w>t_1$. In both cases, the bounded distortion condition is not hold and (\ref{bounded}) leads to a contradiction.

From this we conclude that $(h_2(h_1)(F)|h_2(h_1(H)))_x-(h_2(h_1)(F)|h_2(h_1(G)))_x \leq (t_1-t_0+1)ln(\mathcal{K}_2)+ ln(\mathcal{K}_2)+1\leq (ln(\mathcal{K}_1)+1)ln(\mathcal{K}_2)+ln(\mathcal{K}_2)+1$ proving the claim.

Thus, $$ln(\mathcal{K}_3)=ln(\mathcal{K}_{(T,v),(T'',x)})\leq ln(\mathcal{K}_1)\cdot ln(\mathcal{K}_2) +2 ln(\mathcal{K}_2)+1.$$ Since $\varrho((T,v),(T'',x))=ln(1+2ln(\mathcal{K}_3))$ it follows that \begin{equation}\label{triang}\varrho((T,v),(T'',x))\leq ln(1+2ln(\mathcal{K}_1)\cdot ln(\mathcal{K}_2) +4 ln(\mathcal{K}_2)+2).\end{equation}

Now, observe that $\varrho((T,v),(T',w))+\varrho((T',w),(T'',x))=ln(1+2ln(\mathcal{K}_1))+ln(1+2ln(\mathcal{K}_2))=ln(4 ln(\mathcal{K}_1)\cdot ln(\mathcal{K}_2)+2ln(\mathcal{K}_1)+2ln(\mathcal{K}_2)+1)$.

Since $ln(\mathcal{K}_1)\geq 1$, from (\ref{triang}) we obtain that $$\varrho((T,v),(T'',x))\leq ln(1+2ln(\mathcal{K}_1)\cdot ln(\mathcal{K}_2)+ 2ln(\mathcal{K}_1)\cdot ln(\mathcal{K}_2) +2 ln(\mathcal{K}_2)+2ln(\mathcal{K}_1))=$$
$$=\varrho((T,v),(T',w))+\varrho((T',w),(T'',x)).$$
\end{proof}

\begin{obs} Given $(T,v),(T',w)\in [(R,z)]$, $\varrho((T,v),(T',w))\in \{ln(1+2n) \ | \ n=1,2...\}$.
\end{obs}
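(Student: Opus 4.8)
The plan is to exploit the discreteness already recorded in the proof of Proposition \ref{metric}. Since the trees are simplicial and geodesically complete, every Gromov product $(F|G)_v$ is a non-negative integer, so the end space metric $d_v$ takes values only in $\{\e^{-k} \co k=0,1,2,\dots\}$. First I would observe that, consequently, for any homeomorphism $h\co end(T,v)\to end(T',w)$ and any non-empty sphere, the quantity $D_h(F,\varepsilon)$ is a quotient of two such values and hence lies in the discrete set $\{\e^{m}\co m=0,1,2,\dots\}$; indeed, for $G,G'\in S(F,\varepsilon)$ both image distances are integer powers of $\e^{-1}$, and whenever $h$ has bounded distortion the exponents occurring are bounded, so the defining supremum and infimum are attained and $D_h(F,\varepsilon)=\e^{m}$ for a non-negative integer $m$.

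Next I would fix a bounded distortion homeomorphism $h$ and consider $K_h:=\max\{\sup_{F,\varepsilon}D_h(F,\varepsilon),\ \sup_{F',\varepsilon}D_{h^{-1}}(F',\varepsilon)\}$. Because every distortion lies in $\{\e^0,\e^1,\e^2,\dots\}$ and $K_h$ is finite, all the relevant distortions actually lie in the finite initial segment $\{\e^0,\dots,\e^{M}\}$ for some $M$; a bounded subset of this discrete, strictly increasing sequence attains its supremum, so $K_h=\e^{k}$ for some non-negative integer $k$. Then I would pass to the greatest lower bound: by definition $\mathcal{K}_{(T,v),(T',w)}$ is the infimum of $K_h$ over all bounded distortion homeomorphisms $h$, and since $(T,v),(T',w)\in[(R,z)]$ this infimum ranges over a non-empty subset of $\{\e^0,\e^1,\e^2,\dots\}$. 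As this set is bounded below by $1$ and discrete, the infimum is attained and equals $\e^{k_0}$ for some non-negative integer $k_0$. Hence $ln(\mathcal{K}_{(T,v),(T',w)})=k_0$ and $\varrho((T,v),(T',w))=ln(1+2k_0)\in\{ln(1+2n)\co n=0,1,2,\dots\}$, with the value $n=0$ occurring precisely in the rooted isometric case by Corollary \ref{cor:isometry}.

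The one delicate point—and the place where the simplicial hypothesis is essential—is the claim that the greatest lower bound is \emph{attained} at an integer power of $\e$, rather than being approached as a strictly smaller non-integer limit. This is exactly where I would invoke the discreteness of $\{\e^k\co k\geq 0\}$: a collection of numbers drawn from a discrete, increasing sequence diverging to infinity has no finite accumulation point, so any infimum bounded above coincides with the minimum of the collection. Without the simplicial assumption this fails, as Example \ref{ejp1} shows: there the distortions $\e^{1/n}$ are not integer powers of $\e$ and the infimum $\mathcal{K}=1$ is approached but the conclusion on the structure of admissible values breaks down. I expect no further obstacle; the remaining work is only the bookkeeping above, and if one wishes to read the statement for distinct (non-isometric) classes then $k_0\geq 1$, recovering exactly the indexing $n=1,2,\dots$.
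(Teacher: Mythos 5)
Your proposal is correct and follows essentially the same route the paper takes: the remark rests on the observation, recorded inside the proof of Proposition \ref{metric}, that for simplicial trees every distortion $D_h(F,\varepsilon)$ lies in $\{\e^{n} \mid n=0,1,2,\dots\}$, so the infimum defining $\mathcal{K}_{(T,v),(T',w)}$ is attained at an integer power of $\e$ and $\varrho=\ln(1+2k_0)$. Your extra care about attainment of the sup/inf and about the $n=0$ case (which occurs exactly on the diagonal, by Corollary \ref{cor:isometry}, and which the paper's indexing $n=1,2,\dots$ tacitly excludes) is sound and fills in details the paper leaves implicit.
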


\begin{obs} $([(R,z)],\varrho)$ is discrete.
\end{obs}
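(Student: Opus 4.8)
The plan is to deduce discreteness immediately from the value set of $\varrho$ recorded in the preceding Remark, so the argument is little more than an observation. First I would recall that, by Corollary \ref{cor:isometry}, $\varrho((T,v),(T',w))=0$ holds exactly when $(T,v)$ and $(T',w)$ coincide in the quotient $[(R,z)]$, while for every distinct pair the preceding Remark gives $\varrho((T,v),(T',w))\in\{\ln(1+2n)\mid n=1,2,\dots\}$. The essential point is then that this value set is bounded below by its least element $\ln 3$, attained at $n=1$, so that any two distinct points of $[(R,z)]$ lie at $\varrho$-distance at least $\ln 3>0$.

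Given this, the conclusion is a one-line deduction. I would fix an arbitrary $(T,v)\in[(R,z)]$ and consider the open ball $B\bigl((T,v),\ln 3\bigr)$. Any $(T',w)$ in this ball satisfies $\varrho((T,v),(T',w))<\ln 3$, and since every distinct pair has $\varrho$-distance at least $\ln 3$, the only point of the ball is $(T,v)$ itself. Hence $(T,v)$ is isolated; as the point was arbitrary, every point of $[(R,z)]$ is isolated, which is precisely the assertion that $([(R,z)],\varrho)$ is discrete. In fact the same argument yields the stronger property of \emph{uniform} discreteness, with uniform separation constant $\ln 3$.

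There is essentially no obstacle to overcome here: all the substantive work lies in the two results already in hand, namely that $\varrho$ is a genuine metric (Proposition \ref{metric}) and that it takes its values in a discrete subset of $\br_+$ (the preceding Remark). The only thing one must confirm is that the infimum of the positive values is strictly positive, and this is automatic, since those values form the strictly increasing sequence $\ln 3<\ln 5<\ln 7<\cdots$ whose first term is positive and attained.
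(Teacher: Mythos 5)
Your argument is correct and is exactly the one the paper intends: the preceding Remark confines the nonzero values of $\varrho$ to $\{\ln(1+2n)\mid n\geq 1\}$, and combined with Corollary \ref{cor:isometry} this gives a uniform separation $\ln 3$ between distinct points, hence discreteness. The paper leaves this unproved as an immediate consequence of that Remark, so your write-up simply makes the same reasoning explicit.
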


\begin{nota}\label{unique tree} As Mosher, Sageev, and Whyte \cite[page 118]{Mo} point out,
any two bounded valence (i.e. there is a constant $A$ such that the valence of 
every vertex is less than $A$), locally finite, simplicial, bushy trees
are quasi-isometric. 
Therefore, any such tree is quasi-isometric to the infinite binary tree.
See also Bridson and Haefliger \cite[page 141, Exercise 8.20(2)]{B-H} for the special case of
regular, simplicial trees. 
\end{nota}

Let $\mathcal{T}_{\geq 3}^*$ the class of bounded valence trees in $\mathcal{T}_{\geq 3}$. From \ref{unique tree} it follows:

\begin{prop} $(\mathcal{T}_{\geq 3}^*,\varrho)$ is a metric space.
\end{prop}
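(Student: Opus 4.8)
The plan is to reduce the statement to Proposition \ref{metric}, which already establishes that $\varrho$ is a metric on any single quasi-isometry class $[(R,z)]$. The only genuinely new input needed is the observation that the whole of $\mathcal{T}_{\geq 3}^*$ sits inside one such class, after which the proposition transfers verbatim.

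First I would fix an arbitrary representative $(R,z)\in \mathcal{T}_{\geq 3}^*$ and verify that every tree in $\mathcal{T}_{\geq 3}^*$ is quasi-isometric to it. A tree $(T,v)\in \mathcal{T}_{\geq 3}^*$ has bounded valence by definition, and since it lies in $\mathcal{T}_{\geq 3}$ every vertex has valence at least $3$; hence $(T,v)$ is locally finite (its valences are bounded below by $3$ and above by some constant $A$) and bushy (minimal valence $3$ guarantees at least three unbounded components after removing a suitable nearby vertex). Thus $(T,v)$ is a bounded valence, locally finite, simplicial, bushy tree, and Remark \ref{unique tree} applies: any two such trees are quasi-isometric, all of them being quasi-isometric to the infinite binary tree. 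In particular every member of $\mathcal{T}_{\geq 3}^*$ is quasi-isometric to $(R,z)$. By the very definition of $[(R,z)]$ this means its end space is bounded distortion equivalent to $end(R,z)$ (which, if one prefers, is also read off from the quasi-isometry/PQ-symmetry/bounded-distortion correspondence recorded in Theorem \ref{cor:trees} and the subsequent propositions), so $\mathcal{T}_{\geq 3}^*\subseteq [(R,z)]$.

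Next I would invoke Proposition \ref{metric}, which asserts that $\varrho$ is a metric on $[(R,z)]$: it is real-valued, non-negative, symmetric, vanishes exactly on rooted isometric pairs, and satisfies the triangle inequality throughout $[(R,z)]$. Since the restriction of a metric to any subset is again a metric, and $\mathcal{T}_{\geq 3}^*\subseteq [(R,z)]$, the function $\varrho$ restricted to $\mathcal{T}_{\geq 3}^*$ is a metric, which is exactly the assertion.

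I do not expect a substantive obstacle here, as all the analytic work—finiteness of $\varrho$ on a quasi-isometry class and the triangle inequality—was already carried out in Proposition \ref{metric}. The one point requiring care is purely an accounting of hypotheses: I must confirm that bounded valence together with minimal valence $3$ simultaneously supplies local finiteness and bushiness, so that every tree of $\mathcal{T}_{\geq 3}^*$ genuinely meets all the conditions of Remark \ref{unique tree} and is thereby forced into the single class $[(R,z)]$ on which the metric property has already been proved.
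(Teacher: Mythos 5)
Your proposal is correct and follows exactly the paper's route: the paper likewise deduces the proposition directly from Remark \ref{unique tree} (all bounded valence, locally finite, simplicial, bushy trees lie in a single quasi-isometry class) combined with Proposition \ref{metric}. You merely make explicit the hypothesis-checking (bounded valence gives local finiteness, valence $\geq 3$ with geodesic completeness gives bushiness) that the paper leaves implicit.
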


Given a rooted simplicial tree $(T,v)$ and a vertex $x \in T$, let us denote the order of $x$ by $ord(x)$ and let us call the vertices whose geodesic to the root $v$ contains $x$, \textbf{descendants of $x$}.  If $k\in \bn$, by $desc_k(x)$ we denote the set of descentants of $x$ at a distance $k$ whith the canonical metric on the tree (edges having length 1).

Given $x\in (T,v)$, we denote by $T_x=\{y\in T \ | \ x\in [v,y]\}$ is also a tree. If $(T,v)$ is geodesically complete, so it is $(T_x,x)$ and there is a canonical injection $j\co end(T_x,x) \to end(T,v)$. Abusing of the notation we may consider $end(T_x,x) \subset end(T,v)$.

\begin{lema} Let $(T,v)$, $(T',w)$ be two rooted simplicial trees and $x$ be a vertex in $T$. If $ord(x)=m+1$ and $\mathcal{D}=\min_k \{sup_{x'\in \mathcal{V}(T')}\{desc_k(x')\}<m\}$ then $\varrho((T,v),(T'w))\geq ln(1+2\mathcal{D})$.
\end{lema}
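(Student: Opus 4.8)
The plan is to unwind the definition of $\varrho$ and reduce the statement to a fact about a single bounded distortion equivalence. Since $\varrho((T,v),(T',w))=\ln(1+2\ln\mathcal{K}_{(T,v),(T',w)})$ and $t\mapsto\ln(1+2t)$ is increasing, the asserted inequality $\varrho\geq\ln(1+2\mathcal{D})$ is equivalent to $\ln\mathcal{K}_{(T,v),(T',w)}\geq\mathcal{D}$. Because both trees are simplicial, every distortion $D_h(F,\varepsilon)$ is a power $e^{k}$ with $k\in\bn$, so $\ln\mathcal{K}$ is a non-negative integer and it suffices to prove that every homeomorphism $h\co end(T,v)\to end(T',w)$ satisfies $\sup_{F,\varepsilon}D_h(F,\varepsilon)\geq e^{\mathcal{D}}$; writing $e^{N}$ for this supremum, I must show $N\geq\mathcal{D}$. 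Throughout I set $M_k=\sup_{x'\in\mathcal{V}(T')}|desc_k(x')|$, so that the hypothesis reads $M_{\mathcal{D}}<m\leq M_{\mathcal{D}+1}$, i.e. $\mathcal{D}+1=\min\{k:M_k\geq m\}$ (note $M_k$ is non-decreasing and, by minimal degree $3$, tends to $\infty$, so $\mathcal{D}$ is finite).

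Fix such an $h$ with constant $e^{N}$. The vertex $x$ has order $m+1$, hence at least $m$ children; let $n=\|x\|$ and choose branches $G_1,\dots,G_m\in end(T,v)$ running through $m$ distinct children of $x$. Then $(G_i|G_j)_v=n$, so $d_v(G_i,G_j)=e^{-n}$ for all $i\neq j$; in particular each $G_j$ with $j\neq i$ lies on the sphere $S(G_i,e^{-n})$. Put $p_i=h(G_i)$ (pairwise distinct) and $s_{ij}=(p_i|p_j)_w\in\bn$. The bounded distortion of $h$ at each $G_i$ gives $D_h(G_i,e^{-n})\leq e^{N}$, and restricting the sup and inf defining $D_h$ to the subfamily $\{G_j\}_{j\neq i}$ yields $\max_{j\neq i}s_{ij}-\min_{j\neq i}s_{ij}\leq N$ for every $i$.

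Next I would pin all the $s_{ij}$ into a single window of width $N$. Let $L=\min_{i\neq j}s_{ij}$, attained by a pair $(c,d)$. For any index $i$, the ultrametric inequality applied to $p_i,p_c,p_d$ forces the minimum of $\{s_{ic},s_{id},s_{cd}\}$ to be attained at least twice; since $s_{cd}=L$ is the global minimum this gives $s_{ic}=L$ or $s_{id}=L$, so $\min_{j\neq i}s_{ij}=L$ for every $i$. Combined with the previous bound this shows $s_{ij}\in[L,L+N]$ for all $i\neq j$. Consequently all $p_i$ share the vertex $z=p_1(L)\in\mathcal{V}(T')$ at level $L$, and writing $U=\max_{i\neq j}s_{ij}\leq L+N$ the rays become pairwise distinct exactly at level $U+1$: the vertices $p_1(U+1),\dots,p_m(U+1)$ are $m$ distinct descendants of $z$ at distance $U+1-L\leq N+1$. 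Hence $M_{N+1}\geq|desc_{N+1}(z)|\geq m$, so $\min\{k:M_k\geq m\}\leq N+1$, i.e. $\mathcal{D}+1\leq N+1$ and $N\geq\mathcal{D}$. As this holds for every $h$, we get $\ln\mathcal{K}_{(T,v),(T',w)}\geq\mathcal{D}$, which is the claim.

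The main obstacle is the middle step: turning the several width-$N$ bounds coming from the different centres $G_i$ into one common window $[L,L+N]$. The distortion inequalities alone only locate each family $\{s_{ij}\}_{j}$ in its own interval, and naive comparison of these intervals loses a factor; what rescues the sharp constant is the purely ultrametric observation that the least Gromov product seen from every $p_i$ equals the global minimum $L$. I would also be careful with the boundary bookkeeping, since the $m$ images separate at level $U+1$ rather than $U$, producing descendants at distance up to $N+1$; this $+1$ is exactly absorbed by the shift in $\mathcal{D}+1=\min\{k:M_k\geq m\}$, so the final constant $\ln(1+2\mathcal{D})$ is sharp. The remaining points are routine: geodesic completeness supplies a branch through each child of $x$, and restricting $D_h$ to the subfamily $\{G_j\}$ only decreases the distortion ratio, so the inequality on the $s_{ij}$ is legitimate.
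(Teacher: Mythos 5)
Your proof is correct and is essentially the paper's argument run in contrapositive form: the paper takes the deepest common vertex $y$ of the images of $m$ branches through the children of $x$ and applies the pigeonhole principle to the fewer-than-$m$ descendants of $y$ at distance $\mathcal{D}$ to exhibit a sphere distorted by at least $e^{\mathcal{D}}$, while you assume a distortion bound $e^{N}$, trap all pairwise Gromov products of the images in the window $[L,L+N]$ above that same meet vertex, and count $m$ distinct descendants at distance $N+1$ to force $N\geq \mathcal{D}$. The ingredients (the $m$ branches through the children of $x$, the common meet vertex, the ultrametric observation that the minimal product is seen from every image, and the descendant count in $T'$) coincide, so this is the same proof up to logical direction, with your write-up supplying details the paper's terse sketch leaves implicit.
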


\begin{proof} Consider any bijection $h\co end(T,v) \to end(T'w)$. Let $x_1, ..., x_m$ be $m$ different vertices in $desc_1(x)$ and let $\mathcal{F}_i=end(T_{x_i},x_i) \subset end(T,v)$ for $i=1,m$. There is a unique vertex $y\in T'$ such that $h(F)\in end(T'_y,y)$ and $|wy|$ is maximal. Then, there are three branches $F_{i_1},F_{i_2},F_{i_3}$ such that $[h(F_{i_1})|h(F_{i_2})]=[w,y]$, $[h(F_{i_1})|h(F_{i_3})]=[w,y]$ and $(h(F_{i_2})|h(F_{i_3}))_w\geq |wy|+\mathcal{D}$. Therefore, $D_h(F_1,e^{-||x||}) \geq \mathcal{D}$ and $\varrho((T,v),(T'w))\geq ln(1+2\mathcal{D})$.
\end{proof}

Therefore, $(\mathcal{T}_{\geq 3}^*,\varrho)$ is an unbounded metric space.

\end{document}